\providecommand\@dotsep{5}
\def\listtodoname{List of Todos}
\def\listoftodos{\@starttoc{tdo}\listtodoname}
\numberwithin{equation}{section}
\newtheorem{theorem}{Theorem}[section]
\newtheorem{lemma}[theorem]{Lemma}
\newtheorem{remark}{Remark}
\newtheorem{definition}{Definition}[section]
\begin{document}

\title[Some qualitative properties for ... ]
{Some qualitative properties for the Kirchhoff total variation flow}
\author{ Tahir Boudjeriou}
\address[Tahir Boudjeriou]
{\newline\indent  
	Department of Mathematics
	\newline\indent Faculty of Exact Sciences
	\newline\indent Lab. of Applied Mathematics
	\newline\indent University of Bejaia, Bejaia, 06000, Algeria 
	\newline\indent
	e-mail:re.tahar@yahoo.com}

\pretolerance10000

\begin{abstract}
 In this paper we are concerned with the following Kirchhoff type problem involving the 1-Laplace operator :
\begin{equation*}
\left\{\begin{array}{llc}
u_{t}-m\left(\int_{\Omega}|Du|\right)\Delta_{1} u=0 & \text{in}\ & \Omega\times (0,+\infty) , \\
u=0 & \text{on} &\partial \Omega\times (0,+\infty),\\
u(x,0)=u_{0}(x) & \text{in} &\Omega , 
\end{array}\right.
\end{equation*}
where $\Omega\subset \mathbb{R}^{N}$ ($N\geq 1$) is a bounded smooth domain, $m :\mathbb{R}_{+}\rightarrow \mathbb{R}_{+}$ is an increasing continuous function that satisfies some conditions which will be mentioned further down, and $\Delta_1 u=\text{div}\left(\frac{Du}{|Du|}\right)$ denotes the 1-Laplace operator. The main purpose of this work is to investigate from the initial data $u_{0}$ and the nonlinear function $m$ the existence and asymptotic behavior of solutions near the extinction time.
 \end{abstract}
\thanks{}
\subjclass[2000]{35K55, 35B40} 
\keywords{Kirchhoff Total variation 
flow, extinction time}

\maketitle	
\section{Introduction and the main results} 
In recent years, Andreu, Ballester, Caselles and Maz\'on \cite{Andreu}, and Andreu,  Caselles,  D\'iaz and Maz\'on \cite{Andreu2} studied the following problem of total variation flow
\begin{equation}\label{P}
\left\{\begin{array}{llc}
u_{t}-\text{div}\left(\frac{Du}{|Du|}\right)=0 & \text{in}\ & \Omega\times (0,+\infty) , \\
u=0 & \text{on} &\partial \Omega\times (0,+\infty),\\
u(x,0)=u_{0}(x) & \text{in} &\Omega , 
\end{array}\right.\tag{P}
\end{equation}
Under some assumptions on the initial data $u_{0}$, they used the nonlinear semigroup theory to prove the existence and uniqueness of solutions as well as they also described the behavior of solutions to the above problem near the extinction time.

The objective of this paper is to discuss the existence and asymptotic behavior of solutions near the extinction time to the following class of Kirchhoff type problem involving the $1$-Laplace operator 

\begin{equation}\label{E1}
\left\{\begin{array}{llc}
u_{t}-m\left(\int_{\Omega}|Du|\right)\Delta_{1} u=0 & \text{in}\ & \Omega\times (0,+\infty) , \\
u=0 & \text{on} &\partial \Omega\times (0,+\infty),\\
u(x,0)=u_{0}(x) & \text{in} &\Omega , 
\end{array}\right.
\end{equation}
where  $\Omega \subset \mathbb{R}^N$ is a smooth bounded domain, $N \geq 1$  and $\Delta_1 u=\text{div}\left(\frac{Du}{|Du|}\right)$ denotes the 1-Laplace operator. In the literature,  equations in (\ref{P})-(\ref{E1}) are  also known  as very singular diffusion equations, see, for example  Giga, Giga and Kobayashi \cite{Giga1} and their references. 

These kinds of problems have attracted much attention in recent years due to their applications in image processing, faceted crystal growth, continuum mechanics, for details see  \cite{And} and  \cite{Giga1}. In geometry, as it is well known the unit normal of the level set $\{u=k\}$ is given formally by $\eta(x)=\frac{Du}{|Du|}$, then the mean curvature of this surface  is formally given by 
$$ H(u)=\text{div}(\eta)(x)=\text{div}\left(\frac{Du}{|Du|}\right),$$
which turns out that  the solution of a parabolic 1-laplacian problem can be also seen as a solution to the evolution mean curvature flow for the level sets $\{u=k\}$, see Ecker \cite{KE} for more details. 

 As we observe from (\ref{E1}), the differential operator in the left-hand side of the equation in (\ref{P}) has a singularity at $|Du|=0$, which introduces some extra difficulties and special features. Another difficulty here is to give sense to the boundary condition $u=0$ on $\partial \Omega$ which is in general does not necessarily hold in the sense of trace. 
 
 In order to handle such difficulties, Andreu, Ballester, Caselles, and Maz\'on \cite{Andreu, Andreu3} proposed a definition of  weak solution  based on the
 so-called Anzellotti
 pairing $(z, Du)$ of $L^{\infty}$-divergence-measure vector field $z$ and gradient of a $BV$ function $u$. In their definition the bounded vector field $z$ considered as a substitute of $\frac{Du}{|Du|}$, while the boundary condition $u=0$ on $\partial \Omega$ is taken in a very weak sense (see Definition \ref{Def1}). After the concept of solutions becomes known in the literature, there are two approaches used frequently to show the existence  of solutions to total variation flow  problems. The first one is based on the nonlinear semigroup theory, in particular on techniques of completely accretive operators and the Crandall-Liggett semigroup generation theorem, and when this approach has been used,  different assumptions on the initial data $u_{0}$ assumed to establish the existence of solutions, see for example  \cite{Andreu, Andreu3, And, DH1, DH2, Giga1}. In \cite{ANDR}, Andreu,  Maz\'on, and Mollthe extended the work presented in \cite{Andreu} for (\ref{P}) to the following total variation follow with nonlinear boundary conditions 
 
 \begin{equation}\label{P2}
 \left\{\begin{array}{llc}
 u_{t}-\text{div}\left(\frac{Du}{|Du|}\right)=0 & \text{in}\ & \Omega\times (0,+\infty) , \\
 -\frac{\partial u}{\partial \nu}\in \beta(u) & \text{on} &\partial \Omega\times (0,+\infty),\\
 u(x,0)=u_{0}(x) & \text{in} &\Omega , 
 \end{array}\right.\tag{NP}
 \end{equation}
 where $\Omega$ is an open bounded domain in $\mathbb{R}^{N}$ with a $C^{1}$ boundary, $\partial /\partial \nu$ is the Neumann boundary operator associated to $\frac{Du}{|Du|}$, i.e., 
 $$ \frac{\partial u}{\partial \nu} :=\left\langle \frac{Du}{|Du|}, \nu\right\rangle,$$
 with $\nu$ is the unit outward normal vector on $\partial \Omega$ and $\beta$ is a maximal monotone graph in $\mathbb{R}\times \mathbb{R} $ with $0\in \beta(0)$. In this work by using the nonlinear semigroup theory and the Anzellotti pairing the authors proved the following interesting results : 
 \begin{itemize}
 	\item If $u_{0}\in L^{2}(\Omega)$, then there exists a unique strong solution $u(t)$ of  (\ref{P2}), 
 	\item if $u_{0}\in L^{1}(\Omega)$, then there exits a unique entropy solution of (\ref{P2}). 
 \end{itemize} 
Furthermore, they provided some explicit solutions to (\ref{P2}). Maz\'on, Rossi and Segura de Le\'on \cite{DH3}
 considered the following problem with dynamical boundary condition 
 \begin{equation}\label{PD}
 \left\{\begin{array}{llc}
 -\text{div}\left(\frac{Du}{|Du|}\right)=0 & \text{in}\ & \Omega\times (0,+\infty) , \\
 u_{t}+\left[\frac{Du}{|Du|},\nu\right]=g, & \text{on} &\partial \Omega\times (0,+\infty),\\
 u(x,0)=u_{0}(x) & \text{in} &\Omega, 
 \end{array}\right.\tag{PD}
 \end{equation}
 and proved the existence and uniqueness of a semigroup solution of (\ref{PD}) provided that $u_{0}\in L^{2}(\Omega)$ and $g\in L^{2}(0, T; L^{2}(\Omega))$. 
 
 The second strategy is based on taking the limit as $p\rightarrow 1^{+} $ of solutions to the following problem 
 
 \begin{equation}\label{Pp}
 \left\{\begin{array}{llc}
 u_{t}-\text{div}\left(|\nabla u|^{p-2}\nabla u\right)=0 & \text{in}\ & \Omega\times (0,+\infty) , \\
 u=0 & \text{on} &\partial \Omega\times (0,+\infty),\\
 u(x,0)=u_{0}(x) & \text{in} &\Omega , 
 \end{array}\right.\tag{$P_{p}$}
 \end{equation}
As far as we know, this approach has been used for the first time by  Leon and Webler in \cite{SW} where the authors
 studied global existence and uniqueness of solutions for an  inhomogeneous problem related to (\ref{P14}). Namely, they considered the following problem 
\begin{equation}\label{P14}\left\{
\begin{array}{llc}
u_{t}-\text{div}\left(\frac{Du}{|Du|}\right)= f(x,t)& \text{in}\ & \Omega\times (0, +\infty) , \\
u =0 & \text{in} & \partial\Omega\times (0, +\infty), \\
u(x,0)=u_{0}(x)& \text{in} &\Omega , 
\end{array}\right.
\end{equation}
and proved global existence and uniqueness of solutions for (\ref{P14}) via a parabolic $p$-laplacian problem and then taking the limit $p\rightarrow 1^{+}$
where $u_{0} \in L^{2}(\Omega)$ and $f\in L_{loc}^{1}(0, +\infty;L^{2}(\Omega) )$.  Gianazza and Klaus \cite{GZZ1}  showed  that the solution of (\ref{Pp}) converges to a solution of the total variation flow problem (\ref{P}). Subsequently in \cite{Alves}, Alves and Boudjeriou  considered the following problem 
\begin{equation}\label{AT}\left\{
\begin{array}{llc}
u_{t}-\text{div}\left(\frac{Du}{|Du|}\right)= f(u)& \text{in}\ & \Omega\times (0, +\infty) , \\
u =0 & \text{in} & \partial\Omega\times (0, +\infty), \\
u(x,0)=u_{0}(x)& \text{in} &\Omega , 
\end{array}\right.
\end{equation}
where $f(u)$ behaving like these functions $f(u)=|u|^{q-2}ue^{\alpha |u|^2}$ for $q>1$, $\alpha>0$ and   $f(u)=|u|^{q-2}u+|u|^{s-2}u$ with $q,s \in (1,N/(N-1))$, and established the existence of global solutions by taking the limit as $p\rightarrow 1^{+}$ of solutions to a parabolic $p$-Laplace problem related to (\ref{AT}). It is worth mentioning that the approximation of proper solutions to the $1$-laplacian in terms of solutions to the $p$-laplacian as $p\rightarrow 1^{+}$ has been also used to treat this stationary problem 
\begin{equation*}\left\{
\begin{array}{llc}
-\text{div}\left(\frac{Du}{|Du|}\right)= f(u)& \text{in}\ & \Omega, \\
u =0 & \text{in} & \partial\Omega, 
\end{array}\right.
\end{equation*}
where $\Omega \subseteq \mathbb{R}^{N}$ and $f :\mathbb{R} \rightarrow \mathbb{R}$ is a continuous function satisfies some conditions. For example, we refer the reader to Alves \cite{Alves0}, Juutinen \cite{Juu}, Mercaldo et al. \cite{MRST} and \cite{MST}, Molino Salas and Segura de Le\'on \cite{sergio}, Kawohl and  Schuricht \cite{Kawohl} and the references therein. 

In conclusion, we point out that B\"ogelein, Duzaar, and Marcellini in \cite{VBD1,VBD2} developed a new approach based on a parabolic variational inequality to solve some classes of total variation flow. In this mentioned approach, the Anzellotti-pairing plays no role and the theory of nonlinear semigroup is ignored. Very recently, Kinnunen and Scheven in \cite{JKN} showed that weak solutions to the total variation flow based on the Anzellotti pairing and the approach due to B\"ogelein, Duzaar and Marcellini are in fact equivalent under natural assumptions. 

When one compares problem (\ref{E1})  with the previous ones there are important differences, for example, the differential operator in (\ref{E1})  is not homogeneous of zero degree and so many techniques used in \cite{Andreu2} are not applicable here. It is important to stress that this is the first time the Kirchhoff total variation flow is studied. 

Throughout the paper, we shall assume that $m :\mathbb{R}_{+}\rightarrow \mathbb{R}_{+}$  is an increasing continuous function that satisfies  :
$$
 m(r)\geq m(0)>0\;\text{for every}\; r\geq 0. \eqno(m_1)
$$ 
In what follows, we denote by $ L_{w}^{1}(0, T; BV(\Omega))$ the space of functions $u : [0, T]\rightarrow BV(\Omega)$ such that $u \in L^{1}(\Omega \times (0,T))$, the maps $t\mapsto \langle Du(t), \varphi\rangle$ is measurable for every $\varphi \in  C_{0}^{1}(\Omega, \mathbb{R}^{N})$ and such that $ \int_{0}^{T}\int_{ \Omega} |Du(t)|\,dt< \infty$. The space $BV(\Omega)$ will be introduced in the next section. We denote the positive part of $w$ by $w^{+}=\max\{w,0\}$. These notations $ u_{t}(t)$ or $u'(t)$ will be used to denote the derivative of $u$ with respect to time $t$.

Now, for the reader's convenience, we give the definition of solutions to (\ref{E1}) based on the Anzellotti pairing.
\begin{definition}\label{Def1} Let $u_{0} \in L^{2}(\Omega)\cap \text{BV}(\Omega)$. A function $u\in C([0,T], L^{2}(\Omega))$ will be called a strong solution of (\ref{E1}) if $u_{t}\in L^{2}(0, T;L^{2}(\Omega))$, $u\in L_{w}^{1}(0, T;\text{BV}(\Omega)) $ and there exist $z(t)\in X(\Omega)$, $\|z(t)\|_{\infty}\leq 1$, satisfying
	
	\begin{enumerate}
		\item $u'(t)-m\left(\int_{\Omega}|Du(t)|\right)\text{div}(z(t))=0, \;\;\text{in}\;\mathcal{D}'(\Omega) \;\;\text{a.e}\;t\in [0,T],$
		\item $ \int_{\Omega} (z(t), Du(t))	=\int_{\Omega}|Du(t)|,$
		\item $[z(t), \nu]\in \text{sign}(-u(t))\;\;\mathcal{H}^{N-1}-\text{a.e}\;\;\text{on}\;\partial \Omega.$
	\end{enumerate}
\end{definition}
The first result of this paper reads as follows : 
\begin{theorem}\label{Th1}
Let $u_{0}\in BV(\Omega)\cap L^{\infty}(\Omega)$ and assume that $(m_{1})$ holds. Then Problem (\ref{E1}) has a unique solution $u$ in the sense of Definition \ref{Def1}, which satisfies 
\begin{equation}\label{EN}
	M\left( \int_{ \Omega} |Du(t)|+\int_{\partial\Omega} |u(t)|\,d\mathcal{H}^{N-1}\right)\leq M\left( \int_{ \Omega} |Du_{0}|+\int_{\partial\Omega} |u_{0}|\,d\mathcal{H}^{N-1}\right) \;\;\; \forall t\geq 0, 
\end{equation}
where $M(\sigma)=\int_{0}^{\sigma}m(s)\,ds$. Moreover, assuming that there is $r>0$ such that $B_{r}(0)\subset\subset \Omega$. Then the following conclusion holds :
\begin{itemize}
	\item If $u_{0}(x)=k\chi_{B_{r}(0)}(x)$ and $m(\sigma)=(\sigma+1)^{p}$ for $\sigma\geq 0$ with $p> 1$, $k>0$. Then the explicit solution of (\ref{E1}) is given by 
	\begin{equation}\label{Sl1}
		u(x,t)=\frac{\left(\left[N(p-1)\gamma_{N}r^{N-2}t+(\gamma_{N} kr^{N-1}+1)^{1-p}\right]^{\frac{1}{1-p}}-1\right)^{+}}{\gamma_{N}r^{N-1}}\chi_{B_{r}(0)}(x)\chi_{[0,T]}(t).
	\end{equation}
where $T=\frac{1-(\gamma_{N} kr^{N-1}+1)^{1-p}}{N(p-1)\gamma_{N}r^{N-2}}$, $\gamma_{N}=\frac{N\pi^{N/2}}{\Gamma \left(\frac{N}{2}+1\right)}$. 

	\item If $u_{0}(x)=k\chi_{B_{r}(0)}(x)$ and $m(\sigma)=1+\sigma$ for $\sigma\geq 0$, $k>0$. Then the explicit solution of (\ref{E1}) is given by
	\begin{equation}\label{Sl2}
	u(x,t)=\frac{\left(e^{-N\gamma_{N}r^{N-2}t}(\gamma_{N}r^{N-1}k+1)-1\right)^{+}}{\gamma_{N}r^{N-1}}\chi_{B_{r}(0)}(x)\chi_{[0,T]}(t),	
	\end{equation}
where $T=\frac{\log(\gamma_{N}r^{N-1}k+1)}{N\gamma_{N}r^{N-2}}.$ 
\end{itemize}
\end{theorem}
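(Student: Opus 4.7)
The plan is to establish existence by the parabolic Kirchhoff $p$-Laplacian approximation
\begin{equation*}
u_{p,t} - m\!\left(\Bigl(\textstyle\int_\Omega|\nabla u_p|^p\,dx\Bigr)^{1/p}\right)\text{div}\bigl(|\nabla u_p|^{p-2}\nabla u_p\bigr)=0, \quad u_p|_{\partial\Omega}=0,\ u_p(\cdot,0)=u_0,
\end{equation*}
and passing to the limit $p\to 1^+$, in the spirit of Le\'on--Webler \cite{SW}, Gianazza--Klaus \cite{GZZ1}, and Alves--Boudjeriou \cite{Alves}. For each fixed $p>1$, existence and uniqueness of a regular weak solution $u_p$ follows from a Galerkin scheme coupled with a Schauder-type iteration on the nonlocal coefficient $a_p(t):=m(\|\nabla u_p(t)\|_{L^p})$, which by $(m_{1})$ is bounded below by $m(0)>0$ and thus keeps the approximating equation uniformly parabolic. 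The weak maximum principle yields $\|u_p\|_{L^\infty(\Omega\times(0,T))}\le\|u_0\|_{L^\infty}$, and testing with $u_{p,t}$ gives the Kirchhoff energy identity, from which one extracts uniform-in-$p$ bounds: $u_p\in L^\infty(0,T;BV(\Omega))$, $u_{p,t}\in L^2(0,T;L^2(\Omega))$, and $z_p:=|\nabla u_p|^{p-2}\nabla u_p$ satisfies $\|z_p\|_{L^{p'}}=\|\nabla u_p\|_{L^p}^{p-1}$, which forces $\|z\|_{L^\infty}\le 1$ in the weak-$*$ limit as $p\to 1^+$.

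Using the compact embedding $BV(\Omega)\subset\subset L^1(\Omega)$ and an Aubin--Lions-type argument, I would extract a subsequence with $u_p\to u$ strongly in $L^1(\Omega\times(0,T))$ and a.e., $u_{p,t}\rightharpoonup u_t$ in $L^2$, and $z_p\rightharpoonup^* z$ in $L^\infty$. The main obstacle is the identification of the limit: one must show that $\bigl(\int_\Omega|\nabla u_p(t)|^p\,dx\bigr)^{1/p}\to \int_\Omega|Du(t)|$ for a.e.\ $t$ (not merely the easy $\liminf\ge$), so that continuity of $m$ transfers the nonlocal coefficient, and simultaneously verify the Anzellotti conditions $(z(t),Du(t))=|Du(t)|$ as Radon measures and $[z(t),\nu]\in\text{sign}(-u(t))$ on $\partial\Omega$. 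This is achieved by testing the $p$-equation with $u_p$ itself to produce $\int_\Omega u_p\,\text{div}(z_p)=-\int_\Omega|\nabla u_p|^p$ and passing to the limit against the analogous BV Anzellotti--Green identity for $u$, which by squeeze reveals that all the lower-semicontinuity inequalities are equalities in the limit. The energy inequality \eqref{EN} then follows by passing to the limit in the approximate energy identity, using that $M$ is convex and nondecreasing by $(m_{1})$, and that $\int_\Omega|Du|+\int_{\partial\Omega}|u|\,d\mathcal{H}^{N-1}$ is the relaxation of the $BV$-seminorm accounting for the Dirichlet datum. Uniqueness is obtained by testing the difference of two candidate solutions with $u_1-u_2$ and exploiting the monotonicity of $-\Delta_1$ (as a maximal monotone subdifferential in $L^2$) jointly with the monotonicity of $m$.

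For the explicit solutions \eqref{Sl1} and \eqref{Sl2}, I make the radial ansatz $u(x,t)=f(t)\chi_{B_r(0)}(x)$, which gives $\int_\Omega|Du(t)|=\gamma_N r^{N-1}f(t)$ from the jump of height $f(t)$ across $\partial B_r$. The Anzellotti field is constructed explicitly as
\begin{equation*}
z(x,t)=\begin{cases} -x/r & \text{if } |x|\le r,\\[2pt] -r^{N-1}x/|x|^N & \text{if } r<|x|,\end{cases}
\end{equation*}
which is continuous across $\partial B_r$, has $\|z\|_{L^\infty}\le 1$, satisfies $\text{div}(z)=-N/r$ on $B_r$ and $\text{div}(z)=0$ outside $B_r$, and trivially fulfils the boundary sign condition since $u\equiv 0$ on $\partial\Omega$. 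One verifies directly that $(z,Du(t))=|Du(t)|$ as measures. Substituting into condition (1) of Definition \ref{Def1}, the equation on $B_r$ reduces to the ODE
\begin{equation*}
f'(t)=-\frac{N}{r}\,m\bigl(\gamma_N r^{N-1}f(t)\bigr), \quad f(0)=k.
\end{equation*}
With the substitution $g(t):=\gamma_N r^{N-1}f(t)+1$, this becomes $g'=-N\gamma_N r^{N-2}g^p$ when $m(\sigma)=(\sigma+1)^p$ and $g'=-N\gamma_N r^{N-2}g$ when $m(\sigma)=1+\sigma$; separation of variables yields the explicit formulas \eqref{Sl1} and \eqref{Sl2}, and the extinction time $T$ is the first zero of $f$, i.e.\ the time at which $g(T)=1$.
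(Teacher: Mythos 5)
Your verification of the explicit solutions (\ref{Sl1})--(\ref{Sl2}) is correct and essentially the paper's own argument: same ansatz $u=f(t)\chi_{B_r(0)}$, same Anzellotti field, and your ODE $f'=-\tfrac{N}{r}\,m(\gamma_N r^{N-1}f)$ is the right one (the paper's intermediate display carries a spurious factor of $N$, but its final formulas agree with yours).

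The existence/uniqueness part is where you take a genuinely different route, and where there is a genuine gap. The paper does not use the $p$-Laplacian approximation at all: it uses the Chipot--Lovat/Gobbino time-reparametrization \cite{ML,MG4}, observing that $u$ solves (\ref{E1}) if and only if $u(t)=v(\alpha(t))$, where $v$ is the already-known unique solution of the total variation flow (\ref{P}) from \cite{Andreu} and $\alpha$ solves the scalar ODE $\alpha'=m\bigl(\int_\Omega|Dv(\alpha)|\bigr)$, $\alpha(0)=0$, which is explicitly solvable by separation of variables thanks to $(m_1)$; existence, uniqueness and (\ref{EN}) then all follow from the semigroup theory for $v$. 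Your $p\to1^{+}$ scheme could in principle work, but the step you yourself flag as ``the main obstacle'' --- convergence of $\bigl(\int_\Omega|\nabla u_p(t)|^p\,dx\bigr)^{1/p}$ to the total variation for a.e.\ $t$ --- is exactly where the Kirchhoff structure concentrates all the difficulty, and ``testing with $u_p$ and squeezing'' does not resolve it as written. Concretely: (i) under Dirichlet conditions the natural limit of $\int_\Omega|\nabla u_p|^p\,dx$ is the relaxed functional $\int_\Omega|Du|+\int_{\partial\Omega}|u|\,d\mathcal{H}^{N-1}$, not $\int_\Omega|Du|$, so the coefficient you would identify need not match Definition \ref{Def1}; (ii) you need this convergence \emph{pointwise} in $t$ so that continuity of $m$ applies, whereas energy estimates only give time-integrated information, and the upgrade (via monotonicity in $t$ of the energies and a Dini-type argument) is not supplied; (iii) for $u_0\in BV(\Omega)\cap L^\infty(\Omega)$, e.g.\ $u_0=k\chi_{B_r}$, one has $u_0\notin W^{1,p}_0(\Omega)$, so the data must themselves be approximated with $\int_\Omega|\nabla u_{0,p}|^p\,dx$ controlled uniformly in $p$, which you do not address. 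Your uniqueness sketch, by contrast, can be made rigorous via the symmetrized estimate $\langle m_1\xi_1-m_2\xi_2,\,u_1-u_2\rangle\ge (m_1-m_2)\bigl(\Phi(u_1)-\Phi(u_2)\bigr)\ge 0$, using the subdifferential inequality on both sides, the $1$-homogeneity of $\Phi$ and the monotonicity of $m$ --- but this works only if the argument of $m$ is the same functional $\Phi$ whose subdifferential drives the flow, a point you should make explicit.
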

\begin{remark}\label{ReM}
	From \cite [Lemma 1]{Andreu}, we observe that solving Problem (\ref{E1}) in the sense of Definition \ref{Def1} is equivalent to solve the following Cauchy problem 
	\begin{equation}\label{E11}
	\left\{\begin{array}{llc}
	u_{t}(t)+m\left(\int_{\Omega} |Du(t)|\right)A(u(t))\ni  0 & \text{in}\ & t>0 , \\
	u(0)=u_{0}, & & 
	\end{array}\right.
	\end{equation}
	where $A(u)=\partial \Phi(u)$ and 
	$$
	\Phi(u)=\left\{
	\begin{array}{l}
	\displaystyle \int_{\Omega} |Du|+\int_{\partial\Omega} |u|\, d\mathcal{H}^{N-1},\quad \mbox{if} \quad u \in \text{BV}(\Omega) \cap H,\\
	\mbox{}\\
	+\infty, \quad \mbox{if} \quad  u \in H \setminus \text{BV}(\Omega) \cap H.
	\end{array}
	\right.
	$$ 
	Here and throughout the paper $H=L^{2}(\Omega)$.
\end{remark}
\begin{remark}
From (\ref{Sl1})-(\ref{Sl2}), we observe that the solution of (\ref{E1})  is discontinuous and has the minimal required spatial regularity $u(x,t)\in BV(\Omega)\backslash W^{1,1}(\Omega).$ We point out that the regularity of solutions to problem (\ref{P})  still an open question. 
\end{remark}
\begin{remark}
By using the method of speratinng variables, the authors in  (\cite{Andreu2}, \cite{Kawohl}) have observed that every solution for a parabolic 1-laplacian problem  must decays to zero in finite time $T$. In view of (\ref{Sl1})-(\ref{Sl2}), we observe that the same conclusion still holds true for the solutions of (\ref{E1}). 
\end{remark}
In order to continue with this new line of research, we suggest some open questions, which we consider to be interesting :
\begin{itemize}
\item We are wondering if the result stated in Theorem \ref{Th1} still holds in the case when $m(0) =0$. 
\item 	Let $u_{0}, u_{1} :\Omega \rightarrow \mathbb{R}$ be smooth functions. Can we prove the existence of  a function $u$ in the sense of Definition \ref{Def1} obeying the equations
\begin{equation}
\begin{array}{llc}
u_{tt}-\text{div}\left(\frac{Du}{|Du|}\right)=0 & \text{in}\ & \Omega\times (0,+\infty) , \\
u=0 & \text{on} &\partial \Omega\times (0,+\infty),\\
u(x,0)=u_{0}(x), \; u_{t}(x,0)=u_{1}(x)& \text{in} &\Omega?.
\end{array}
\end{equation}
\end{itemize}
The proof of the first part of Theorem \ref{Th1} will be based on the existence of solutions for (\ref{P})  and the existence of solutions for an ordinary differential equation. This argument has been introduced by Chipot and Lovat \cite{ML} to prove the existence and uniqueness of solution for this nonlocal problem involving the Laplace operator 
\begin{equation*}
\left\{\begin{array}{llc}
u_{t}-a\left(\int_{\Omega} u(x,t)\,dx\right)\Delta u=0 & \text{in}\ & \Omega\times (0,+\infty) , \\
u=0 & \text{on} &\partial \Omega\times (0,+\infty),\\
u(x,0)=u_{0}(x) & \text{in} &\Omega , 
\end{array}\right.
\end{equation*}
where $a$ is assumed to be continuous and $a(s)>0$ for all $s>0$. Later on, this approach have been used by Gobbino \cite{MG4} to  extend Chipot and Lovat result to an abstract setting. Namely, he discussed the existence and uniqueness of solution for this initial value problem 
 \begin{equation}\label{ESS}
 \left\{\begin{array}{llc}
 u_{t}+m\left( \|A^{1/2}u\|^{2}\right)A u=0 & \text{in}\ & t>0 , \\
u(0)=u_{0}\in H, & & 
 \end{array}\right.
 \end{equation}
 where $A$ is a self-adjoint linear non-negative operator on $H$ with domain $D(A)$. The function $m$ is continuous and behaving like $m(\sigma)=a+b\sigma$, $a\geq0 $, $b>0$. In this work the author proved the existence of at least one solution for (\ref{ESS}) when $m\left( \|A^{1/2}u_{0}\|^{2}\right)\neq 0$. The construction of explicit solutions to (\ref{E1}) is inspired by a work due to Andreu, Caselles, D\'iaz and Maz\'on \cite{Andreu2} where the authors have been  studied the parabolic $1$-laplacian problem where $m\equiv 1.$
 
Before introducing the second result, we need the following definition : 
\begin{definition}
We say that Problem (\ref{E1}) has the extinction in finite time property if there exists a number $T_{0}> 0$ such that $u(x,t)=0$ when $t\geq T_{0}$ and for any $x\in \Omega$.	
\end{definition}

\begin{theorem}\label{Th2}
	Let $u_{0}\in BV(\Omega)\cap L^{\infty}(\Omega)$ and let $u(x,t)$ be a unique solution of Problem (\ref{E1}). Assume that $(m_{1})$ holds and let $d(\Omega)$ be the smallest radius of  a ball containing $\Omega$. If $T^{*}(u_{0})=\inf\{t> 0 :\, u(t)=0\}$, then 
	\begin{equation}
	T^{*}(u_{0})\leq \frac{d(\Omega)\|u_{0}\|_{\infty}}{Nm(0)}.
	\end{equation}
	Moreover, there holds 
	\begin{equation}\label{BO}
		\|u(t)\|_{\infty}\leq \|u_{0}\|_{\infty}, \;\;\text{for any} \;t\geq 0. 	
	\end{equation}
\end{theorem}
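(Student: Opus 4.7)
My plan is to reduce Theorem~\ref{Th2} to the analogous facts for the classical total variation flow \eqref{P} via the Chipot--Lovat time change already underlying the proof of Theorem~\ref{Th1} and the reformulation in Remark~\ref{ReM}. Concretely, I will write the unique solution of \eqref{E1} in the form $u(x,t)=v(x,g(t))$, where $v$ is the unique strong solution of \eqref{P} starting from the same datum $u_0$ and $g:[0,T^{*}(u_0)]\to[0,T^{*}_v]$ is the $C^1$ time change determined by the scalar Cauchy problem
$$
g'(t)=m\!\left(\int_\Omega |Dv(g(t))|\right),\qquad g(0)=0.
$$
Once this identification is in hand, both assertions of the theorem become statements about $v$ composed with an elementary scalar ODE, and the assumption $(m_1)$ enters only through the bound $g'(t)\ge m(0)>0$.

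For the $L^\infty$ bound \eqref{BO}, I would invoke the well-known fact that the semigroup generated by $\partial\Phi$ on $L^{2}(\Omega)$ is an $L^\infty$-contraction, a consequence of the complete accretivity of $\partial\Phi$ established in \cite{Andreu,Andreu2}. This yields $\|v(s)\|_\infty\le\|u_0\|_\infty$ for every $s\ge 0$, which transfers instantly to $u(t)=v(g(t))$.

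For the extinction time, the classical bound for \eqref{P},
$$
T^{*}_v:=\inf\{s>0:v(s)=0\}\le \frac{d(\Omega)\|u_0\|_\infty}{N},
$$
is obtained in \cite{Andreu2} by comparing $v$ (extended by zero) to the radial explicit solution issuing from $\|u_0\|_\infty\chi_{B_{d(\Omega)}(x_0)}$ on a ball $B_{d(\Omega)}(x_0)\supset\Omega$. Since $g'(t)\ge m(0)$, one has $g(t)\ge m(0)t$, so $g^{-1}(s)\le s/m(0)$; taking $s=T^{*}_v$ we find
$$
T^{*}(u_0)=g^{-1}(T^{*}_v)\le \frac{T^{*}_v}{m(0)}\le \frac{d(\Omega)\|u_0\|_\infty}{Nm(0)},
$$
which is precisely the claimed bound.

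The main obstacle is making the comparison argument behind the estimate $T^{*}_v\le d(\Omega)\|u_0\|_\infty/N$ fully rigorous in the BV framework: the supersolution lives on a ball strictly larger than $\Omega$, so the comparison must be performed on a common domain. I would resolve this either through the $L^{1}$-order-preservation of the TV semigroup after zero-extending $v$ to $B_{d(\Omega)}(x_0)$, or at the level of the approximating $p$-Laplacian problems, where the pointwise maximum principle is immediate, followed by the passage $p\to 1^{+}$ in the spirit of \cite{GZZ1,Alves}. Everything else is routine bookkeeping with the scalar ODE for $g$.
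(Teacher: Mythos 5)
Your proposal is correct, but it takes a genuinely different route from the paper. The paper proves a new comparison principle directly for the Kirchhoff flow (Lemma \ref{TH2}): it compares the solution $u$ of (\ref{E1}) with the spatially constant barrier $\alpha(t)=\frac{Nm(0)}{d(\Omega)}\bigl(\frac{d(\Omega)\|u_{0}\|_{\infty}}{Nm(0)}-t\bigr)^{+}$, realized as a solution of (\ref{E1}) via the vector field $z_{2}(t)(x)=\frac{\alpha'(t)x}{Nm(0)}$, and runs the Anzellotti-pairing/Green's formula computation on $\int_{\Omega}[(u_{1}-u_{2})^{+}]^{2}$, using the monotonicity of $m$ to kill the extra nonlocal terms. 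This yields $-\alpha(t)\le u(t)\le\alpha(t)$, hence both conclusions at once (and in fact the stronger linear decay $\|u(t)\|_{\infty}\le\alpha(t)$). You instead exploit the time-change $u(t)=v(g(t))$ already established in the proof of Theorem \ref{Th1}, together with the known facts for the classical flow (\ref{P}): the $L^{\infty}$-contraction of the semigroup and the extinction bound $T^{*}_{v}\le d(\Omega)\|u_{0}\|_{\infty}/N$ from \cite{Andreu2}. Your reduction $T^{*}(u_{0})=g^{-1}(T^{*}_{v})\le T^{*}_{v}/m(0)$ via $g'\ge m(0)$ is sound, and it is the more economical argument for this particular theorem, since it only uses $(m_{1})$ through the lower bound $m\ge m(0)$ and requires no new BV computations. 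What it does not buy you is the comparison lemma itself, which the paper reuses in Section 5 to prove Theorem \ref{TH3} (comparison with non-constant radial barriers); your route would need a separate argument there. Finally, the ``main obstacle'' you flag --- justifying $T^{*}_{v}\le d(\Omega)\|u_{0}\|_{\infty}/N$ --- is not really an obstacle: this is a quotable result of \cite{Andreu2}, explicitly cited as such in the introduction, so no zero-extension or $p\to1^{+}$ approximation is needed.
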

Extinction of solutions  for evolutionary equations has been widely studied in literature. As far as we know, the first proof of finite-time extinction for the total variation flow (\ref{P}) was given by Andreu, Caselles, Diaz,
and Maz\'on in \cite{Andreu2}. In that paper, the authors proved a comparison principle for total variation flow (\ref{P}), and obtained that
\begin{equation*}
T^{*}(u_{0})\leq \frac{d(\Omega)\|u_{0}\|_{\infty}}{N}.
\end{equation*}
Our approach to prove Theorem \ref{Th2} is also based on a comparison principle established in this paper. Recently, Giga and Kohn in \cite{Giga1} rather than using a comparison principle  they used an energy estimate with a suitable Sobolev-type inequality to prove the finite-time extinction of solutions to (\ref{P}) under different boundary conditions (Periodic BC, Neumann BC, Dirichlet BC). Namely, they showed that
for every $u_{0}\in L_{av}^{2}(\mathbf{T}^{N})$, the extinction time satisfies
$$ T^{*}(u_{0})\leq S_{N}\|u_{0}\|_{L^{N}}, $$ 
where $L_{av}^{2}(\mathbf{T}^{N})=\{v\in L^{2}(\mathbb{T}^{N}) :\, \int_{\mathbf{T}^{N}}v\,dx=0 \}$ and $S_{N} $ is the best constant in the Sobolev inequality. Bonforte and Figalli \cite{BF} studied the explicit dynamic
and sharp asymptotic behaviour for the following Cauchy problem of total variation flow in one dimension 
 \begin{equation*}
\left\{\begin{array}{llc}
u_{t}-\left(\frac{u'}{|u'|}\right)'=0 & \text{in}\ & \mathbb{R}\times (0,+\infty) , \\
u(x,0)=u_{0}(x) & \text{in} &\mathbb{R}, 
\end{array}\right.
\end{equation*}
Moreover, for any given nonnegative compactly supported initial datum  $ u_{0}\in L^{1}(\mathbb{R})$, the authors proved that 
$$ T^{*}(u_{0})=\frac{1}{2}\int_{\mathbb{R}}u_{0}(x)\,dx.$$
 The first part of the next theorem demonstrates some lower and  upper bounds for the solution of the Kirchhoff total variation flow with different choices of the function $m$. While the second part shows that  there is no propagation
 of the support of the initial datum. We point out that it has been proved that if $p> 2$, then there is finite speed propagation property of solutions to (\ref{Pp}), that is,  if $supp (u_{0})\subset B_{r}(0)\subset\subset \Omega$, then the solution of (\ref{Pp}) satisfies that $supp (u(t)) $ is a compact set for all $t> 0$; for more details see, \cite{JDH} and \cite{VAZ}.
 
 \begin{theorem}\label{TH3}
 	Let $u_{0}\in BV(\Omega)\cap L^{\infty}(\Omega)$ and denote by $u(t)$ the solution of (\ref{E1})  with initial data $u_{0}$. Assume  that there is $r>0$ such that $B_{r}(0)\subset\subset \Omega$. Then the following conclusion hold :
 	\begin{itemize}
 		\item If $m(\sigma)=(\sigma+1)^{p}$ for $ \sigma \geq 0$ and $p> 1$, then 
 		\begin{equation}\label{IN1}
 		\|u(t)\|_{\infty} \geq\frac{\left(\left[N(p-1)\gamma_{N}r^{N-2}t+(\gamma_{N} kr^{N-1}+1)^{1-p}\right]^{\frac{1}{1-p}}-1\right)^{+}}{\gamma_{N}r^{N-1}}\;\;\text{for}\;\; 0\leq t\leq T_{1}^{*}(u_{0}), 
 		\end{equation} 
 		where $T_{1}^{*}(u_{0})=\frac{1-(\gamma_{N} kr^{N-1}+1)^{1-p}}{N(p-1)\gamma_{N}r^{N-2}}$, $\gamma_{N}=\frac{N\pi^{N/2}}{\Gamma \left(\frac{N}{2}+1\right)}$. Moreover, if $supp (u_{0})\subset B_{r}(0)\subset\subset \Omega$, then $supp (u(t))\subset B_{r}(0) $ for all $t\geq 0$, and $u(t)=0$ for all $t\geq \frac{1-(\gamma_{N} \|u_{0}\|_{\infty}r^{N-1}+1)^{1-p}}{N(p-1)\gamma_{N}r^{N-2}}$.
 		\item If $m(\sigma)=\sigma+1$ for $ \sigma \geq 0$, then 
 		\begin{equation}\label{IN2}
 		\|u(t)\|_{\infty} \geq \frac{\left(e^{-N\gamma_{N}r^{N-2}t}(\gamma_{N}r^{N-1}k+1)-1\right)^{+}}{\gamma_{N}r^{N-1}} \;\;\text{for}\;\; 0\leq t\leq T_{2}^{*}(u_{0}), 	
 		\end{equation}
 		where $T_{2}^{*}(u_{0})=\frac{\log(\gamma_{N}r^{N-1}k+1)}{N\gamma_{N}r^{N-2}}.$ Furthermore, if $supp (u_{0})\subset B_{r}(0)\subset\subset \Omega$, then $supp (u(t))\subset B_{r}(0) $ for all $t\geq 0$, and $u(t)=0$ for all $t\geq \frac{\log(\gamma_{N}r^{N-1}\|u_{0}\|_{\infty}+1)}{N\gamma_{N}r^{N-2}}.$
 	\end{itemize}
  \end{theorem}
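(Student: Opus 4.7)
The plan is to derive both conclusions from the explicit solutions constructed in Theorem \ref{Th1}, via the comparison principle for (\ref{E1}) that also underlies the proof of Theorem \ref{Th2}. Denote by $v_{k}(x,t)$ the explicit solution of (\ref{E1}) given by (\ref{Sl1}) (respectively (\ref{Sl2})) when the initial datum is $k\chi_{B_{r}(0)}$ and $m(\sigma)=(\sigma+1)^{p}$ (respectively $m(\sigma)=\sigma+1$); its $L^{\infty}$-norm coincides with the right-hand side of (\ref{IN1}) (respectively (\ref{IN2})), and its spatial support lies in $\overline{B_{r}(0)}$ at every time and is empty for $t\geq T_{i}^{*}$.

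For the pointwise lower bounds (\ref{IN1})--(\ref{IN2}), I would choose $k\geq 0$ so that $u_{0}\geq k\chi_{B_{r}(0)}$ a.e.\ in $\Omega$ (the natural choice being $k=\mathrm{ess\,inf}_{B_{r}(0)}u_{0}$, though any smaller value suffices). The comparison principle applied to the two solutions $u$ and $v_{k}$ of (\ref{E1}) then forces $u(x,t)\geq v_{k}(x,t)$ a.e.\ in $\Omega\times(0,+\infty)$, and taking the $L^{\infty}$-norm and substituting (\ref{Sl1}) or (\ref{Sl2}) produces the claimed inequalities on the interval $[0,T_{i}^{*}(u_{0})]$ where the right-hand side is still nontrivial.

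For the support confinement and finite-time extinction statements, the hypothesis $\mathrm{supp}(u_{0})\subset B_{r}(0)$ yields $|u_{0}|\leq \|u_{0}\|_{\infty}\chi_{B_{r}(0)}$ a.e. Since (\ref{E1}) is invariant under $u\mapsto -u$ (the Kirchhoff factor depends only on $|Du|$ and the $1$-Laplacian is odd), $-u$ is a solution of (\ref{E1}) with initial datum $-u_{0}$. Applying the comparison principle twice, once to $u$ against $v_{\|u_{0}\|_{\infty}}$ and once to $-u$ against $v_{\|u_{0}\|_{\infty}}$, I obtain $|u(x,t)|\leq v_{\|u_{0}\|_{\infty}}(x,t)$ a.e.\ in $\Omega\times(0,+\infty)$. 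The explicit form of $v_{\|u_{0}\|_{\infty}}$ recalled above then forces both the support inclusion $\mathrm{supp}(u(t))\subset \overline{B_{r}(0)}$ for every $t\geq 0$ and the vanishing of $u$ beyond the stated extinction time $T_{i}^{*}(\|u_{0}\|_{\infty})$.

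The main obstacle is justifying the comparison principle itself. Since the nonlocal Kirchhoff coefficient $t\mapsto m(\int_{\Omega}|Du(t)|)$ is generally different for two solutions with distinct initial data, the difference $u-v$ does not satisfy an equation driven by a single monotone operator, so the test-function argument that works for the local problem (\ref{P}) in \cite{Andreu,Andreu2} does not transfer verbatim. I intend to proceed by the intrinsic time change $\tau_{u}(t)=\int_{0}^{t}m\bigl(\int_{\Omega}|Du(s)|\bigr)\,ds$, which is strictly increasing thanks to $(m_{1})$ and along which $u$ satisfies the classical total variation flow; comparison for that flow is available from \cite{Andreu2}, and the monotonicity of $m$ combined with the energy inequality (\ref{EN}) of Theorem \ref{Th1} should allow the rescaled times of two ordered solutions to be matched and the inequality transferred back to the physical time $t$. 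Carrying out this rescaling argument rigorously inside the Anzellotti-pairing framework of Definition \ref{Def1} is where I expect the main technical effort.
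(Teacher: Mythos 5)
Your overall architecture --- trapping $u$ between the explicit radial solutions of Theorem \ref{Th1} --- is the same as the paper's, and your treatment of the support confinement and extinction statements (comparing $u$ and $-u$ against $v_{\|u_{0}\|_{\infty}}$) is essentially identical to the paper's. For the lower bounds (\ref{IN1})--(\ref{IN2}) you argue directly ($u_{0}\geq k\chi_{B_{r}(0)}$ implies $u\geq v_{k}$), whereas the paper argues by contradiction: if $\|u(t_{0})\|_{\infty}$ dropped strictly below the explicit profile at some $t_{0}$, then $u(t_{0})$ would be sandwiched between $\pm k_{1}\chi_{B_{r}(0)}$ with $k_{1}$ strictly smaller, forcing the remaining extinction time to be strictly less than $T_{1}^{*}(u_{0})-t_{0}$. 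This difference is largely cosmetic, because both versions rest on exactly the same ingredient: a comparison principle between two genuine, spatially non-constant solutions of the nonlocal problem (\ref{E1}).

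That ingredient is where your proposal has a real gap, and you are right to single it out as the main obstacle --- but the repair you sketch does not close. Write $u(t)=U(\tau_{u}(t))$ and $v_{k}(t)=V(\tau_{k}(t))$, where $U,V$ solve the classical flow (\ref{E2}) and $\tau_{u}'=m\bigl(\int_{\Omega}|DU(\tau_{u})|\bigr)$, $\tau_{k}'=m\bigl(\int_{\Omega}|DV(\tau_{k})|\bigr)$. The TV-flow comparison gives $U(\sigma)\geq V(\sigma)$ at each fixed intrinsic time $\sigma$, but what you need is $U(\tau_{u}(t))\geq V(\tau_{k}(t))$ at equal physical time. Reducing this to the known inequality requires controlling $U$ (or $V$) between the two intrinsic times $\tau_{u}(t)$ and $\tau_{k}(t)$; since the larger solution should have the larger total variation and $m$ is increasing, one expects $\tau_{u}\geq\tau_{k}$, i.e.\ the larger solution runs its clock \emph{faster}, which, combined with the decay of the profiles along their own flows, pushes the estimate in the wrong direction. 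So ``matching the rescaled times'' is not bookkeeping; it is the entire difficulty, and you have not shown how to resolve it. Note also that the comparison result actually proved in the paper (Lemma \ref{TH2}) only covers spatially constant barriers $\alpha(t)$ with $|\alpha'(t)|\leq Nm(0)/d(\Omega)$, so an honest proof would have to redo the $L^{2}$-energy estimate for $(u-v_{k})^{+}$ with the two distinct coefficients $m(\int_{\Omega}|Du|)$ and $m(\int_{\Omega}|Dv_{k}|)$ and verify the sign of the resulting cross terms; until that lemma is established, neither your lower bounds nor your support-confinement argument follows.
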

In the last theorem of this paper,  we establish  lower and upper bounds on  the rate of decay of $\|u(t)\|_{N}$ and $\|u(t)\|_{\infty}$ respectively. 

\begin{theorem}\label{Th3}
	Let $u_{0}\in BV(\Omega) \cap L^{\infty}(\Omega)$ and $u(x,t)$ is the unique solution of problem (\ref{E1}). Assume that there exists $\mu \in (0,1)$ such that
	\begin{equation}\label{CN}
	M(\sigma)	\geq \mu m(\sigma)\sigma \;\;\text{for every }\; \sigma\geq 0.
	\end{equation}
	Then we have 
	\begin{itemize}
		\item There exists a constant $\eta>0$ independent of the initial datum, such that 
		\begin{equation}\label{INQ}
		\|u(t)\|_{N}\geq \eta \min\{m_{0}, 1\}N(T^{*}(u_{0})-t)\;\;\text{for}\;\; 0\leq t\leq T^{*}(u_{0}).	
		\end{equation}
		\item Given $0< \tau < T^{*}(u_{0})$, we have 
		\begin{equation}
		\|u(t)\|_{\infty}\leq \frac{|u_{0}|m\left( \frac{1}{\mu m_{0}}M\left(\int_{\Omega} |Du_{0}|+\int_{\partial \Omega} |u_{0}|\,d\mathcal{H}^{N-1}\right)\right)}{\tau}(T^{*}(u_{0})-t) \;\;\text{for}\;\; \tau \leq t\leq T^{*}(u_{0}).
		\end{equation}
	\end{itemize}
\end{theorem}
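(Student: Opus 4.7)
The idea for Part 1 is to bound how fast $\|u(t)\|_N$ can decay. I would formally test the equation in Definition \ref{Def1}(1) against $|u(t)|^{N-2}u(t)$, justifying this rigorously either through the $p$-Laplace regularization $p\to 1^+$ (as in \cite{GZZ1,Alves}) or through a suitable approximation in $BV$. Using the Anzellotti-pairing identity $(z(t),Du(t))=|Du(t)|$ of Definition \ref{Def1}(2) and the boundary relation $[z(t),\nu]\in\mathrm{sign}(-u(t))$ of Definition \ref{Def1}(3), together with the $BV$ chain rule for $v:=|u|^{N-1}$, I expect
\begin{equation*}
\frac{1}{N}\frac{d}{dt}\|u(t)\|_N^N \;=\; -\,m(\phi(t))\,\Phi(|u(t)|^{N-1}),
\end{equation*}
where $\phi(t)=\int_\Omega |Du(t)|+\int_{\partial\Omega}|u(t)|\,d\mathcal{H}^{N-1}$ and $\Phi$ is the functional from Remark \ref{ReM}. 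Applying the $BV$ Sobolev embedding $\|v\|_{N/(N-1)}\leq S_N\,\Phi(v)$ to $v=|u|^{N-1}$ (so that $\|v\|_{N/(N-1)}=\|u\|_N^{N-1}$) and invoking $m(\phi(t))\geq m_0\geq \min\{m_0,1\}$ from $(m_1)$ gives $\frac{d}{dt}\|u\|_N\leq -\min\{m_0,1\}/S_N$. Integrating from $t$ to $T^*(u_0)$ with $u(T^*)=0$ produces the desired lower bound, the constant $\eta = 1/(NS_N)=\omega_N^{1/N}$ being identified through the sharp $BV$ isoperimetric constant $S_N=(N\omega_N^{1/N})^{-1}$.

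For Part 2 I would first translate the energy inequality (\ref{EN}) into a uniform bound on $\phi(t)$: hypothesis (\ref{CN}) gives $\mu m_0\,\phi(t)\leq \mu\,m(\phi(t))\,\phi(t)\leq M(\phi(t))\leq M(\phi(0))$, so
\begin{equation*}
\phi(t)\leq \frac{M(\phi(0))}{\mu m_0},\qquad m(\phi(t))\leq m\!\left(\frac{M(\phi(0))}{\mu m_0}\right)\quad\text{for all }t\geq 0,
\end{equation*}
by monotonicity of $m$. Next I would view (\ref{E11}) through the time change $\sigma(s)=\int_0^s m(\phi(r))\,dr$ (so $\sigma'\geq m_0$) as a plain gradient flow $v_\sigma+A(v)\ni 0$ for the convex functional $\Phi$ on $H=L^2(\Omega)$, and invoke a Brezis-type regularization estimate to deduce $\|u_t(t)\|_\infty\leq \|u_0\|_\infty\,m\!\left(\frac{1}{\mu m_0}M(\phi(0))\right)/\tau$ for $t\geq \tau$. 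Finally, since $u(T^*)=0$, I write $u(t)=-\int_t^{T^*}u_s\,ds$ and take the $L^\infty$ norm to conclude.

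\emph{Main obstacle.} The principal difficulty is upgrading the classical Brezis regularization $\|u_t(t)\|_2\leq C/t$ (valid for gradient flows of convex lsc functionals on a Hilbert space) to the $L^\infty$ form required here. Two natural routes present themselves: (i) exploit the $T$-accretivity of $-\Delta_1$ in $L^\infty(\Omega)$ used in \cite{Andreu,Andreu3}, which yields pointwise contraction estimates at the semigroup level; or (ii) work at the level of the $p$-Laplacian approximation (where the maximum principle yields pointwise time-derivative bounds) and pass to $p\to 1^+$, carefully tracking the dependence on $\tau$. Either way, the time-rescaling by $m(\phi(t))$ must be handled so that the uniform lower bound $\sigma'\geq m_0$ cleanly transports the Brezis constant back to the original time variable, and so that the factor $m\!\left(\frac{1}{\mu m_0}M(\phi(0))\right)$ in the final inequality matches the one arising from the bound on $m(\phi(t))$.
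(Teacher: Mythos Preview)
Your plan for both parts is essentially the same as the paper's proof. For Part~1 the paper tests the equation against $\varphi(u)=|u|^{q-1}u$, uses \cite[Proposition~2.8]{Anz} together with Definition~\ref{Def1}(2)--(3) to obtain
\[
\frac{1}{q+1}\frac{d}{dt}\int_\Omega |u|^{q+1}\,dx + m_0\int_\Omega |D\varphi(u)| + \int_{\partial\Omega}|u|^{q}\,d\mathcal H^{N-1}\le 0,
\]
applies the embedding (\ref{EMD}), sets $q=N-1$, and integrates from $t$ to $T^*(u_0)$ --- exactly your outline. For Part~2 the paper also uses the time change $u(t)=v(\alpha(t))$ with $\alpha'(t)=m\big(\int_\Omega|Du(t)|\big)$, bounds $m\big(\int_\Omega|Du(t)|\big)$ via (\ref{CN}) and (\ref{EN}) precisely as you do, obtains a pointwise bound on $|u'(t)|$, and then writes $u(x,t)=-\int_t^{T^*}u_s\,ds$.

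The one substantive remark concerns what you flag as the ``main obstacle''. It is not an obstacle: the paper simply quotes \cite[Lemma~2]{Andreu2}, which already gives the \emph{pointwise} estimate $|v'(x,t)|\le |u_0(x)|/t$ for the plain total variation flow $v_t+\partial\Phi(v)\ni 0$. This is stronger than the Hilbert-space Brezis bound and does not require $T$-accretivity arguments or a $p\to 1^+$ passage; it follows from the fact that $-\Delta_1$ is homogeneous of degree~zero (so $S(t)(\lambda u_0)=\lambda S(t)u_0$ for the associated semigroup). After the time change, $|u'(t)|=\alpha'(t)\,|v'(\alpha(t))|\le m\big(\int_\Omega|Du(t)|\big)\,|u_0|/\alpha(t)$, and since $\alpha(t)\ge m_0 t$ one recovers the stated bound. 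So your two proposed routes are unnecessary detours; citing \cite[Lemma~2]{Andreu2} closes the argument directly. (A small cosmetic point: the Kirchhoff coefficient in (\ref{E1}) is $m\big(\int_\Omega|Du|\big)$, not $m(\phi(t))$ with the boundary term included; your bounds still go through because $\int_\Omega|Du|\le\phi$ and $m$ is increasing.)
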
 

The paper is organized as follows. In Section $2$, we give the necessary definitions and some preliminary results on functions of bounded variation. In the other sections, we prove our results.
\section*{Acknowledgments} The author would like to thank Professor Claudianor Alves for several interesting and useful discussions on this topic.

\section{Notation and preliminaries involving the space $BV(\Omega)$}
Before giving proofs to the main results of this paper, we  begin by introducing some notations and recalling several facts on functions of bounded variation.

Throughout the paper, without further mentioning, given an open bounded set $\Omega$ in $\mathbb{R}^{N}$ with Lipschtiz boundary, we denote by $\mathcal{H}^{N-1} $ the $(N-1)-$dimensional Hausdorff measure  and $|\Omega|$ stands for the $N$-dimensional Lebesgue measure. Moreover, we shall denote by $\mathcal{D}(\Omega)$ or $C^{\infty}_{0}(\Omega)$, the space of infinitely differentiable functions with compact support in $\Omega$ and $\nu (x) $ is the outer vector normal defined for $\mathcal{H}^{N-1}$- almost everywhere $x\in \partial \Omega$. 

We will denote by $BV(\Omega)$ the space of functions of bounded variation 
$$
BV(\Omega)=\left\{u\in L^{1}(\Omega):\, Du \;\text{is a bounded Radon measure}\right\},
$$
where $Du :\Omega \rightarrow \mathbb{R}^{N}$ denotes the distributional gradient of $u$.  It can be proved that $ u\in {BV}(\Omega)$ is equivalent to $u\in L^{1}(\Omega)$ and 
$$ \int_{\Omega}|Du|:=\text{sup}\left\{\int_{\Omega} u\,\text{div}\varphi\,dx :\, \varphi \in C^{\infty}_{0}(\Omega, \mathbb{R}^{N}),\, |\varphi (x)|\leq 1\; \forall x\in \Omega\right\}< +\infty,$$
where $|Du|$ is the total variation of the vectorial Radon measure. We recall that the space ${BV}(\Omega)$ endowed with the norm 
$$
\|u\|_{{BV}(\Omega)}:=\int_{\Omega} |Du|+\|u\|_{L^{1}(\Omega)},
$$ 
is a Banach space which is  non-reflexive and non-separable. For more information on functions of bounded variation we refer the reader to \cite{Ambrosio}, \cite{Evn} and \cite{Ziem}. 

In view of \cite[Theorem 3.87]{Ambrosio}, the notion of a trace on the boundary can be extended to functions $u\in {BV}(\Omega)$, through a bounded operator ${BV}(\Omega)\hookrightarrow L^{1}(\partial \Omega)$, which is also onto. As a consequence, an equivalent norm on ${BV}(\Omega) $ can be defined by
$$ 
\|u\|:=\int_{\Omega} |Du|+\int_{\partial\Omega} |u|\, d\mathcal{H}^{N-1}.
$$ 
In addition, by \cite[Corollary 3.49]{Abm} the following continuous embeddings hold 
\begin{equation}\label{EMD}
{BV} (\Omega)\hookrightarrow L^{m}(\Omega)\;\;\text{for every}\;\;1\leq m\leq1^{*}=\frac{N}{N-1},
\end{equation}  
which are compact for $1\leq m<1^{*}$.  

In what follows, we recall several important results from \cite{Anz}  which will be used throughout the paper. Following \cite{Anz}, let 
\begin{equation}
X(\Omega)=\left\{z\in L^{\infty}(\Omega, \mathbb{R}^{N}):\, \text{div}(z)\in L^{1}(\Omega) \right\}.
\end{equation}

If $z\in  X(\Omega)$ and $w\in {BV}(\Omega)$ we define the functional $(z, Dw): C^{\infty}_{0}(\Omega)\rightarrow \mathbb{R}$ by formula 
\begin{equation}
\langle (z, Dw),\varphi\rangle=-\int_{\Omega} w\varphi \text{div} (z)\,dx -\int_{\Omega}wz.\nabla \varphi\,dx, \;\;\forall \varphi \in C^{\infty}_{0}(\Omega).
\end{equation}
Then, by \cite[Theorem 1.5]{Anz} $(z, Dw)$ is a Radon measure in $\Omega$, 
$$ \int_{ \Omega} (z, Dw)=\int_{ \Omega} z.\nabla w\,dx, $$
for all $w\in W^{1,1}(\Omega)$  and 
\begin{equation}\label{BR}
\left|	\int_{\Omega} (z, Dw)\right|\leq\int_{B}|(z, Dw)|\leq \|z\|_{\infty}\int_{B} |Dw|.
\end{equation}
for every Borel $B$ set with $B\subseteq  \Omega $. Furthermore, the measures $(z, Dw)$ and $|(z, Dw)|$ are absolutely continuous with respect to the measure $|Dw|$. Denoting by 
$$ \theta (z, Dw, .) :\Omega\rightarrow \mathbb{R}$$ 
the Radon-Nikod\'ym derivative of $(z, Dw)$ with respect to $|Dw|$, it follows that 
\begin{equation}
\int_{B} (z, Dw)=\int_{B} \theta (z, Dw, x)|Dw|, 
\end{equation}
for any Borel set $B\subseteq \Omega$ and 
\begin{equation}
\|\theta(z, Dw, .)\|_{L^{\infty}(\Omega, |Dw|)}\leq \|z\|_{\infty}.
\end{equation}
On the other hand, besides the $BV-$norm, for any nonnegative smooth function $\varphi $  the functional given by 
$$ w \mapsto \int_{\Omega}\varphi |Dw|, $$ 
is lower semicontinuous with respect to the $L^{1}-$convergence, for details see \cite{Ambrosio}.

In \cite{Anz}, a weak trace on $\partial \Omega$  of normal component of $z\in X(\Omega) $ is defined as the application $[z,\nu] : \partial \Omega \rightarrow \mathbb{R}$, such that $[z,\nu]\in L^{\infty}(\partial \Omega)$ and $\|[z,\nu]\|_{\infty}\leq \|z\|_{\infty}$. In addition, this definition coincides with the classical one, that is 
\begin{equation}
[z,\nu]=z.\nu, \;\text{for}\;z\in C^{1}(\overline{\Omega_{\delta}}, \mathbb{R}^{N}), 
\end{equation} 
where $\Omega_{\delta}=\{x\in \Omega :\, d(x, \Omega)< \delta\}$, for some $\delta > 0$ sufficiently small. We recall the Green formula involving the measure $(z,Dw)$ and the weak trace $[z,\nu]$ which was given in \cite{Anz}, namely :
\begin{equation}\label{Gree}
\int_{\Omega} (z, Dw)+\int_{\Omega} w\text{div} z\,dx=\int_{\partial\Omega} w[z,\nu]\, d\mathcal{H}^{N-1},
\end{equation}
for $z\in X(\Omega)$ and $w\in {BV} (\Omega).$

\section{ existence of solutions to (\ref{E1}) }
This section is devoted  to the proof of Theorem \ref{Th1}. The idea of the proof is based on the following observation :   
\begin{equation}\label{EW}
	u \; \text{is a solution of }\; (\ref{E1})\; \text{ if, and only if, }\,u(t)=v(\alpha(t)),
\end{equation} 
where $v$ is a solution for the following  total variation flow problem  
\begin{equation}\label{E2}
\left\{\begin{array}{llc}
v_{t}-\text{div}\left(\frac{Dv}{|Dv|}\right)=0 & \text{in}\ & \Omega\times (0,+\infty) , \\
v=0 & \text{on} &\partial \Omega\times (0,+\infty),\\
v(x,0)=u_{0}(x) & \text{in} &\Omega , 
\end{array}\right.
\end{equation}
and $\alpha$ is a solution of the problem 
\begin{equation}\label{ES}
\left\{\begin{array}{l}
\alpha \in C([0, +\infty)) \cap C^{1}(]0, +\infty)), \\
\alpha'(t)=\varphi(\alpha(t))\;\;\;t> 0, \\
\alpha(t)> 0, \;\;\;t> 0, \\
\alpha(0)=0,
\end{array}\right.
\end{equation} 
where $\varphi(t)=m\left(\int_{\Omega} |Dv(t)|\right)$.

It has proven in \cite{Andreu} that solving problem (\ref{E2}) is equivalent to solving the following Cauchy problem : 
\begin{equation}\label{E3}
\left\{\begin{array}{llc}
v'(t)+A(v(t))\ni  0 & \text{in}\ & 0< t< +\infty , \\
v(0)=u_{0}, & & 
\end{array}\right.
\end{equation}
where $A(v)=\partial \Phi(v)$ and $\Phi(v)$ has been introduced in Remark \ref{ReM}. In view of  $(m_{1})$ and the theory of ordinary differential equations problem (\ref{ES}) has a unique solution given by 
$$ \alpha(t)=\psi^{-1}(t)\;\;\text{where}\; \psi(r)=\int_{0}^{r}\frac{1}{\varphi(s)}\,ds.,$$
 for details see Gobbino \cite{MG4}.  Now let us show (\ref{EW}). Indeed, let $\alpha$ be a solution of (\ref{ES}) and $v$ be a unique solution of (\ref{E3}), let $u(t)=v(\alpha(t))$. Then $u(0)=v(\alpha(0))=v(0)=u_{0}$, and for every $t>0$ we have 
$$ u'(t)=\alpha'(t)v'(t)\in - \alpha'(t)Av(\alpha(t))=-m\left(\int_{\Omega} |D u(t)|\right)A(u(t)).$$ 
This proves that $u$ is a solution of (\ref{E11}) and the regularity of $u$ follows from the regularity of $v$ and $\alpha$. Conversely,  let $u$ be a solution of (\ref{E11}) and let us consider the continuous function $b(t)=m\left(\int_{\Omega} |D u(t)|\right)$ defined for all $t>0$. From $(m_{1})$, we have that $b(t)$ is not identically zero in a right neighbourhood of $t=0$. Moreover, $u$ is a solution for the  initial value problem  

\begin{equation*}
\left\{\begin{array}{llc}
u'(t)+b(t)A(u(t))\ni  0 & \text{in}\ & t>0 , \\
u(0)=u_{0}, & & 
\end{array}\right.
\end{equation*}
This implies that $u(t)=v(\alpha(t))$ where 
$$ \alpha(t)=\int_{0}^{t} b(s)\,ds.$$
Now, it is easy to show that $\alpha$  is a solution to (\ref{ES}).  Finally, by using \cite[Lemma p.73]{H} we conclude that (\ref{EN}) holds.

In the last part of this section we compute the explicit solution to Problem (\ref{E1}). In doing so, we look for a solution of (\ref{E1}) of the form $u(x,t)=\alpha(t)\chi_{B_{r}(0)}(x)$ on some interval $(0, T)$. Then we shall look for some $z(t)\in X(\Omega)$ with $\|z(t)\|_{\infty}\leq 1$, such that 
\begin{equation}\label{Eq1}
u'(t)-m\left(\int_{\Omega} |Du(t)|\right)\text{div}(z(t))=0\;\;\text{in}\; \mathcal{D}'(\Omega), 
\end{equation}
\begin{equation}\label{Eq2}
\int_{ \Omega}(z(t), Du(t))=\int_{\Omega} |Du(t)|, 
\end{equation}
\begin{equation}\label{Eq3}
[z(t), \nu]\in \text{sign}(-u(t))\;\;\;\mathcal{H}^{N-1}-a.e.
\end{equation}
The candidate to $z(t) $ is the vector field 
\begin{equation}\label{VC}
z(t)(x):=\left\{\begin{array}{lcc}
\frac{-x}{r} &\text{if} & x\in B_{r}(0), \;  0\leq t\leq T , \\
\frac{-r^{N-1}x}{|x|^{N}} &\text{if} & x\in \Omega \backslash B_{r}(0), \; 0\leq t\leq T, \\
0 &\text{if} & x\in \Omega, \, t>T, 
\end{array}\right.
\end{equation}
The computations to construct (\ref{VC}) was carried out in \cite[Lemma 1]{Andreu2}, so here we omit it. 
 Integrating Eq.(\ref{Eq1}) over $\Omega$  yields
$$ \alpha'(t)|B_{r}(0)|=-\left[\alpha(t)\gamma_{N}r^{N-1}+1\right]^{p}N\gamma_{N}r^{N-1},$$
where $\gamma_{N}=\frac{N\pi^{N/2}}{\Gamma \left(\frac{N}{2}+1\right)}$. Therefore 
$$ \alpha(t)=\frac{\left[N(p-1)\gamma_{N}r^{N-2}t+(\gamma_{N}kr^{N-1}+1)^{1-p}\right]^{\frac{1}{1-p}}-1}{\gamma_{N}r^{N-1}}.$$
In what follows, we check that 
$$u(x,t) =\frac{\left(\left[N(p-1)\gamma_{N}r^{N-2}t+(\gamma_{N} kr^{N-1}+1)^{1-p}\right]^{\frac{1}{1-p}}-1\right)}{\gamma_{N}r^{N-1}}\chi_{B_{r}(0)}(x)\chi_{[0,T]}(t)$$
where $T= \frac{1-(\gamma_{N} kr^{N-1}+1)^{1-p}}{N(p-1)\gamma_{N}r^{N-2}}$, satisfies (\ref{Eq1})-(\ref{Eq3}). Since $u(t,x)=0$ on $\partial \Omega$, it easy to check that (\ref{Eq3}) holds. On the other hand, if $\varphi \in \mathcal{D}(\Omega)$ and $0\leq t\leq T$, we have 
\begin{multline*}
h(t) \int_{\Omega} \text{div}(z(t))\varphi \,dx=-h(t) \int_{\Omega} z(t).\nabla\varphi \,dx=-h(t) \int_{B_{r}(0)} z(t).\nabla\varphi \,dx-h(t) \int_{\Omega \backslash B_{r}(0)} z(t).\nabla\varphi \,dx\\
= -\frac{Nh(t)}{r}\int_{B_{r}(0)}\varphi \,dx+h(t)\int_{\partial B_{r}(0)}\frac{|x|^{2}}{r^{2}}\varphi\,d\mathcal{H}^{N-1}-h(t)\int_{\Omega\backslash B_{r}(0)}\text{div}\left(\frac{r^{N-1}x}{|x|^{N}}\right) \varphi \,dx\\
-h(t)\int_{\partial B_{r}(0)}\frac{r^{N-1}}{r^{N}}\frac{|x|^{2}}{r}\varphi\,d\mathcal{H}^{N-1}.
\end{multline*}
where $h(t)=\left[N(p-1)\gamma_{N}r^{N-2}t+(\gamma_{N} kr^{N-1}+1)^{1-p}\right]^{\frac{p}{1-p}}$.  Hence 
$$ m\left(\int_{ \Omega} |Du(t)|\right)\int_{ \Omega} \text{div}(z(t))\varphi \,dx=\int_{ \Omega} u'(t)\varphi \,dx, $$ 
and consequently (\ref{Eq1})  holds. Finally, by Green's formula (\ref{Gree}), we have 
\begin{eqnarray*}
	\int_{\Omega} (z(t), Du(t))&=&-\int_{\Omega} \text{div}(z(t))u(t)\,dx+\int_{\partial\Omega} [z(t), \nu]u(t)\,d\mathcal{H}^{N-1}\\
	&=& -\int_{ B_{r}(0)} \frac{\left(\left[N(p-1)\gamma_{N}r^{N-2}t+(\gamma_{N} kr^{N-1}+1)^{1-p}\right]^{\frac{1}{1-p}}-1\right)}{\gamma_{N}r^{N-1}}\text{div}(z(t))\,dx\\
	&=&\frac{N}{r}\int_{B_{r}(0)} \frac{\left(\left[N(p-1)\gamma_{N}r^{N-2}t+(\gamma_{N} kr^{N-1}+1)^{1-p}\right]^{\frac{1}{1-p}}-1\right)}{\gamma_{N}r^{N-1}}\,dx\\
	&=& \frac{\left(\left[N(p-1)\gamma_{N}r^{N-2}t+(\gamma_{N} kr^{N-1}+1)^{1-p}\right]^{\frac{1}{1-p}}-1\right)}{\gamma_{N}r^{N-1}} \frac{N}{r}|B_{r}(0)|\\
	&=&	\frac{\left(\left[N(p-1)\gamma_{N}r^{N-2}t+(\gamma_{N} kr^{N-1}+1)^{1-p}\right]^{\frac{1}{1-p}}-1\right)}{\gamma_{N}r^{N-1}} \mathcal{H}^{N-1}(\partial B_{r}(0))=\int_{\Omega} |Du(t)|.
\end{eqnarray*}
Therefore (\ref{Eq2}) holds, and consequently $u(x,t) $ is a solution of (\ref{E1}) with initial datum $u_{0}$. In a similar fashion one can show that (\ref{Sl2}) satisfies (\ref{Eq1})-(\ref{Eq3}). Hence the proof is now complete.

\section{proof of Theorem \ref{Th2} }
This section is concerned with the proof of Theorem \ref{Th2}. As we have pointed out before, the proof is based on a comparison  principle that we state in what follows. 

\begin{lemma}\label{TH2}
Let $u_{0}\in BV(\Omega)\cap L^{\infty}(\Omega)$ and let $u_{1}(x,t)$ be the unique solution of problem (\ref{E1}). Assume that $(m_{1})$ holds and let $d(\Omega)$ be the smallest radius of  a ball containing $\Omega$. Let $u_{2}(x,t)=\alpha (t)$, satisfying 
\begin{equation}\label{ET}
	|\alpha '(t)|\leq \frac{m(0)N}{d(\Omega)}.
\end{equation}
Then the following conclusion holds :
\begin{enumerate}

	\item If $\alpha(t)\geq 0$ and $u_{0}\leq \alpha(0)$, we have 
	$$ u_{1}(t)\leq u_{2}(t)\;\;\;\text{a.e. on}\; \Omega, $$
	\item If $\alpha(t)\leq 0$ and $u_{0}\geq \alpha(0)$, we have 
	$$ u_{1}(t)\geq u_{2}(t)\;\;\;\text{a.e. on}\; \Omega. $$
\end{enumerate}
\end{lemma}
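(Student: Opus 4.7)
The strategy for case~(1) is to prove that the nonnegative quantity
\[
G(t) := \int_\Omega (u_1(t) - u_2(t))^+ \, dx
\]
is nonincreasing on $[0,+\infty)$. Since $G(0)=0$ by the hypothesis $u_0 \leq \alpha(0)$ and $G\geq 0$, this will force $G\equiv 0$, yielding $u_1(t)\leq u_2(t)$ a.e.\ in $\Omega$. Case~(2) then follows symmetrically by working with $(u_2-u_1)^+$.

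Set $E_t := \{x\in\Omega : u_1(x,t) > \alpha(t)\}$. Testing equation~(1) of Definition~\ref{Def1} for $u_1$ against a smooth piecewise-affine truncation of $(u_1-\alpha)^+$ and passing to the limit (a standard argument in the BV framework, legitimized by the regularity $u_1\in C([0,T];L^2(\Omega))\cap L^1_w(0,T;BV(\Omega))$ with $u_1'\in L^2$) one obtains, for a.e.\ $t>0$,
\[
G'(t) \;=\; m\!\left(\int_\Omega |Du_1(t)|\right)\int_{E_t}\mathrm{div}\,z_1(t)\, dx \;-\; \alpha'(t)\,|E_t|.
\]
The crux is the bound $\int_{E_t}\mathrm{div}\,z_1(t)\, dx \leq -N|E_t|/d(\Omega)$, which I would derive in four steps. (a)~Apply the Green formula~(\ref{Gree}) to decompose the divergence integral into boundary contributions on $\partial^* E_t\cap\Omega$ and on $\partial^* E_t\cap\partial\Omega$. (b)~Condition~(2) of Definition~\ref{Def1} is equivalent to $\theta(z_1(t),Du_1(t),\cdot)\equiv 1$ $|Du_1|$-a.e., so the coarea decomposition of $|Du_1|$ along level sets and the identification $Du_1/|Du_1| = -\nu_{E_t}^{\mathrm{out}}$ on $\partial^* E_t$ give $[z_1(t),\nu_{E_t}^{\mathrm{out}}]=-1$ on $\partial^* E_t\cap\Omega$. (c)~On $\partial^* E_t\cap\partial\Omega$ the trace of $u_1$ equals $\alpha(t)\geq 0$; when $\alpha(t)>0$, condition~(3) forces $[z_1(t),\nu]=-1$ there, and the case $\alpha(t)=0$ is handled by continuity. (d)~Finally, $P(E_t)\geq N|E_t|/d(\Omega)$ follows from the divergence theorem applied to the radial field $x\mapsto x-x_0$ on $E_t\subset B_{d(\Omega)}(x_0)$, with $x_0$ the center of a minimal enclosing ball for~$\Omega$.

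Combining these ingredients with $m(\cdot)\geq m(0)$ from $(m_1)$ yields
\[
G'(t) \;\leq\; -|E_t|\left(\frac{m(0)N}{d(\Omega)}+\alpha'(t)\right) \;\leq\; 0,
\]
where the last inequality uses the one-sided bound $\alpha'(t)\geq -m(0)N/d(\Omega)$ contained in the hypothesis $|\alpha'(t)|\leq m(0)N/d(\Omega)$. Absolute continuity of $G$ then gives $G(t)\leq G(0)=0$, as required.

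The main technical obstacle is step~(b): the Anzellotti/coarea identification of the normal trace of $z_1$ on $\partial^*\{u_1>s\}\cap\Omega$ holds only for a.e.\ level $s\in\mathbb{R}$, whereas we need it at the specific level $s=\alpha(t)$ for a.e.\ $t$. I would resolve this by perturbing $\alpha(t)$ by a vanishing sequence of ``admissible'' levels and exploiting the $L^2$-continuity of $t\mapsto u_1(t)$ to pass to the limit, or equivalently by carrying the smooth truncation $T_\varepsilon$ through the entire argument and removing the regularization only at the end, so that the measure-theoretic identifications are invoked only after integration against the test function.
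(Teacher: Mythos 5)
Your argument is sound and reaches the right conclusion, but it takes a genuinely different route from the paper's. The paper turns the spatially constant function $u_{2}(t)=\alpha(t)$ into an honest solution of the system in Definition \ref{Def1} by exhibiting the explicit vector field $z_{2}(t)(x)=\alpha'(t)x/(Nm(0))$, whose admissibility $\|z_{2}(t)\|_{\infty}\leq 1$ is precisely hypothesis (\ref{ET}); it then runs a two-solution $L^{2}$-comparison, differentiating $\tfrac12\int_{\Omega}[(u_{1}-u_{2})^{+}]^{2}\,dx$, invoking Green's formula (\ref{Gree}), the saturation identity $\int_{\Omega}(z_{1}(t),DR_{t}(u_{1}(t)))=\int_{\Omega}|DR_{t}(u_{1}(t))|$ for $R_{t}(r)=(r-\alpha(t))^{+}$, the bound (\ref{BR}) for the $z_{2}$-term, and the monotonicity of $m$. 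You instead run an $L^{1}$ level-set argument in which the same geometric input (the radial field on the enclosing ball of radius $d(\Omega)$) enters through the isoperimetric-type inequality $P(E_{t})\geq N|E_{t}|/d(\Omega)$ rather than through the normalization of $z_{2}$; the two mechanisms are equivalent, and both proofs ultimately rest on the pairing saturating on $(u_{1}-\alpha)^{+}$ together with $m\geq m(0)$. What the paper's version buys is exactly the removal of the difficulty you flag in step (b): by working with the single $BV$ function $(u_{1}-u_{2})^{+}$ and the measure $(z_{1},D(u_{1}-u_{2})^{+})$, it never needs normal-trace or coarea identifications on individual superlevel sets, so the ``a.e.\ level'' issue never arises --- your proposed fix of carrying the truncation through to the end essentially reproduces that proof. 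Two small corrections to your writeup: in step (c) the trace of $u_{1}$ on $\partial^{*}E_{t}\cap\partial\Omega$ is not \emph{equal} to $\alpha(t)$ but (for admissible levels) strictly larger, which is in fact what you need, since $\mathrm{tr}\,u_{1}>\alpha(t)\geq 0$ forces $[z_{1}(t),\nu]=-1$ there by condition (3) of Definition \ref{Def1}; and the identity for $G'(t)$ should be justified via the Stampacchia chain rule for $t\mapsto(u_{1}(t)-\alpha(t))^{+}$ in $W^{1,2}(0,T;L^{2}(\Omega))$ together with the fact that $\mathrm{div}\,z_{1}(t)=u_{1}'(t)/m(\int_{\Omega}|Du_{1}(t)|)\in L^{2}(\Omega)$, so that integration over the measurable set $E_{t}$ is legitimate without any smooth test function.
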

\begin{proof}
Since $\Omega $ is bounded, without loss of generality, we may assume that $\Omega \subseteq B(0, d(\Omega))$. According to Theorem \ref{Th1} there exists $z_{1}(t)\in X(\Omega)$, $\|z_{1}(t)\|_{\infty}\leq 1$, satisfying

\begin{equation}\label{EQQ1}
	u'_{1}(t)-m\left(\|u_{1}\|\right)\text{div}(z_{1}(t))=0, \;\;\text{in}\;\mathcal{D}'(\Omega) \;\;\text{a.e}\;t\in [0,T]
\end{equation}
\begin{equation}\label{EQQ2}
 \int_{\Omega} (z_{1}(t), Du_{1}(t))	=\int_{\Omega}|Du_{1}(t)|, 
\end{equation}
\begin{equation}\label{EQQ3}
[z_{1}(t), \nu]\in \text{sign}(-u_{1}(t))\;\;\mathcal{H}^{N-1}-\text{a.e}\;\;\text{on}\;\partial \Omega.	
\end{equation}
In view of (\ref{ET}), it is easy see that $u_{2}(t)=\alpha(t)$ satisfies (\ref{EQQ1})-(\ref{EQQ3})
with the chosen  vector field $z_{2}(t)(x)=\frac{\alpha'(t)x}{N m(0)}$. Thus, by the Green's formula (\ref{Gree}), we get 
\begin{eqnarray*}
\frac{1}{2}\frac{d}{dt}\int_{\Omega} [(u_{1}(t)-u_{2}(t))^{+}]^{2}\,dx&=&-m\left(\|u_{1}\|\right) \int_{ \Omega} (z_{1}(t), D[(u_{1}(t)-u_{2}(t))^{+}])\\
&&+m(0)\int_{ \Omega} (z_{2}(t), D[(u_{1}(t)-u_{2}(t))^{+}])\\
&&+m\left(\|u_{1}\|\right) \int_{\partial\Omega} [z_{1}(t), v](u_{1}(t)-u_{2}(t))^{+}\,d\mathcal{H}^{N-1}\\
&&-m(0) \int_{\partial\Omega} [z_{2}(t),\nu](u_{1}(t)-u_{2}(t))^{+}\,d\mathcal{H}^{N-1}.
\end{eqnarray*}

If $R_{t}(r)=(r-\alpha (t))^{+}$, then  by using similar calculations as in the proof of \cite[Theorem 4]{Andreu}, we arrive at 
\begin{equation}\label{EWW}
\int_{\Omega}(z_{1}(t), DR_{t}(u_{1}(t)))= \int_{\Omega}|DR_{t}(u_{1}(t))|.
\end{equation}
Moreover, by (\ref{BR}) we deduce that 
\begin{equation}\label{EST}
 \left|\int_{\Omega} (z_{2}(t), R_{t}(u_{1}(t)))\right|\leq \|z_{2}(t)\|_{\infty}\int_{\Omega} |DR_{t}(u_{1}(t))|\leq \int_{\Omega} |DR_{t}(u_{1}(t))|.
\end{equation}
Combining (\ref{EWW}) and (\ref{EST}), and using the fact that $m$ is increasing function, we obtain
\begin{multline}\label{EA}
-m\left(\|u_{1}\|\right) \int_{ \Omega} (z_{1}(t), D[(u_{1}(t)-u_{2}(t))^{+}])+m(0)\int_{ \Omega} (z_{2}(t), D[(u_{1}(t)-u_{2}(t))^{+}])\\
\leq \left(m(0)-m\left(\|u_{1}\|\right)\right)\int_{\Omega}|DR_{t}(u_{1}(t))|	\leq 0.
\end{multline}
In light of $|[z_{2}(t), \nu]|\leq 1$, $[z_{1}(t), \nu]\in \text{sign}(-u_{1}(t))$ and $u_{2}(t)\geq 0$, we derive
\begin{multline}\label{RT1}
m\left(\|u_{1}\|\right) \int_{\partial\Omega} [z_{1}(t), \nu](u_{1}(t)-u_{2}(t))^{+}\,d\mathcal{H}^{N-1}-m(0) \int_{\partial\Omega} [z_{2}(t),\nu](u_{1}(t)-u_{2}(t))^{+}\,d\mathcal{H}^{N-1}\\
\leq\left(m(0)-m\left(\|u_{1}\|\right) \right)\int_{\partial\Omega \cap \{u_{1}>u_{2}\}} (u_{1}(t)-u_{2}(t))\,d\mathcal{H}^{N-1}\leq 0.
\end{multline}
Gathering (\ref{EA}) and (\ref{RT1}) yields
$$ \frac{1}{2}\frac{d}{dt}\int_{\Omega}[(u_{1}(t)-u_{2}(t))^{+}]^{2}\,dx\leq 0.$$
Hence the condition $u_{1}(0)\leq u_{2}(0)$ ensures $u_{1}\leq u_{2}$. The proof of $(2)$ is quite similar to $(1)$, so here we omit it. 
\end{proof}

\begin{proof} (Theorem \ref{Th2})
Take 
$$ \alpha(t):= \frac{N m(0)}{d(\Omega)}\left(\frac{ d(\Omega)\|u_{0}\|_{\infty}}{m(0)N}-t\right)^{+}. $$
It follows that 
$$ |\alpha'(t)|= \frac{N m(0)}{d(\Omega)} \quad \text{and}\quad \alpha(0)=\|u_{0}\|_{\infty}.$$ 
According to Lemma \ref{TH2}, we conclude

$$ -\alpha(t)\leq u(t)\leq \alpha(t), $$ 
and from which we obtain that (\ref{BO}) holds. Hence the proof now is complete. 
\end{proof}
\section{Proof of Theorem \ref{TH3} }
In this section we are concerned with the proof of Theorem \ref{TH3}. Here we only prove the inequality in (\ref{IN1}). The proof of the  inequality in (\ref{IN2}) is similar, so here we omit it. Since $\Omega$ is a bounded domain, then without lost of generality we may assume that $\Omega\subseteq B_{r} (0)$, for some $r>0$ . By Theorem \ref{Th1}, we know that 
	$$ v(t,x)=\frac{\left(\left[N(p-1)\gamma_{N}r^{N-2}t+(\gamma_{N} kr^{N-1}+1)^{1-p}\right]^{\frac{1}{1-p}}-1\right)^{+}}{\gamma_{N}r^{N-1}}\chi_{B_{r}(0)}$$  is a solution of problem (\ref{E1})  with the chosen vector field (\ref{VC})  and initial datum $u_{0}=k \chi_{B_{r}(0)}$. In order to proof (\ref{IN1}) we argue by contradiction by assuming that, there exists $t_{0}\in (0, T^{*}_{1}(u_{0}))$ such that 
	\begin{equation*}\label{IN3}
	\|u(t_{0})\|_{\infty} <\|v(t_{0})\|_{\infty}, 
	\end{equation*}
which implies that there exists $\epsilon > 0$ such that 
	\begin{equation}\label{IN4}
	\|u(t_{0})\|_{\infty} <\frac{\left(\left[N(p-1)\gamma_{N}r^{N-2}t_{0}+(\gamma_{N} kr^{N-1}+1)^{1-p}\right]^{\frac{1}{1-p}}-1\right)^{+}}{\gamma_{N}r^{N-1}}-\epsilon=k_{1}. 
	\end{equation}
	Now let us consider the following functions:  
	\begin{equation}\label{SOL1}
	v_{1}(x,t):=\frac{\left(\left[N(p-1)\gamma_{N}r^{N-2}t+(\gamma_{N} k_{1}r^{N-1}+1)^{1-p}\right]^{\frac{1}{1-p}}-1\right)^{+}}{\gamma_{N}r^{N-1}}\chi_{B_{r}(0)},
	\end{equation}
	and
	\begin{equation}\label{SOL2}
	v_{2}(x,t):=-\frac{\left(\left[N(p-1)\gamma_{N}r^{N-2}t+(\gamma_{N} k_{1}r^{N-1}+1)^{1-p}\right]^{\frac{1}{1-p}}-1\right)^{+}}{\gamma_{N}r^{N-1}}\chi_{B_{r}(0)}.	
	\end{equation}	
	In view of (\ref{IN4}), clearly we have that $v_{2}(0)\leq u(t_{0}) \leq v_{1}(0)$. Moreover,  proceeding similarly as in the proof of Theorem \ref{Th1}, one can show that $v_{1}$ and $v_{2}$ are solutions to (\ref{E1}) with the candidate vector field (\ref{VC})  and initial datum $u_{0}=k_{1} \chi_{B_{r}(0)}$ and $u_{0}=-k_{1} \chi_{B_{r}(0)}$ respectively. Thus by using the comparison principle  in Lemma \ref{Th2}, we deduce $v_{2}(t)\leq u(t_{0}+t)\leq v_{1}(t).$ Hence, it follows that
	\begin{eqnarray*}
 T^{*}_{1}(u_{0})-t_{0}&=&T^{*}_{1}(u(t_{0}))\leq \frac{1-(\gamma_{N} k_{1}r^{N-1}+1)^{1-p}}{N(p-1)\gamma_{N}r^{N-2}}\\
 &= & \frac{1-\left(\left[N(p-1)\gamma_{N}r^{N-2}t_{0}+(\gamma_{N} kr^{N-1}+1)^{1-p}\right]^{\frac{1}{1-p}}-\epsilon \gamma_{N}r^{N-1}\right)^{1-p}}{N(p-1)\gamma_{N}r^{N-2}}\\
 &< & T^{*}_{1}(u_{0})-t_{0}.
\end{eqnarray*}
But this is a contradiction. This completes the proof of the first statement. Now we turn to show the second statement of Theorem \ref{TH3}. Let $\zeta=\|u_{0}\|_{\infty}$, according to Theorem \ref{Th1} one can show that the following functions 
	
		\begin{equation}\label{SOL33}
	v_{1}(x,t):=\frac{\left(\left[N(p-1)\gamma_{N}r^{N-2}t+(\gamma_{N} \zeta r^{N-1}+1)^{1-p}\right]^{\frac{1}{1-p}}-1\right)^{+}}{\gamma_{N}r^{N-1}}\chi_{B_{r}(0)},
	\end{equation}
	and
	\begin{equation}\label{SOL4}
	v_{2}(x,t):=-\frac{\left(\left[N(p-1)\gamma_{N}r^{N-2}t+(\gamma_{N} \zeta r^{N-1}+1)^{1-p}\right]^{\frac{1}{1-p}}-1\right)^{+}}{\gamma_{N}r^{N-1}}\chi_{B_{r}(0)}.	
	\end{equation}	
	are solutions of (\ref{E1}) with initial datum $\zeta\chi_{B_{r}(0)}$ and $-\zeta\chi_{B_{r}(0)}$ respectively. Thus, by the comparison principle seen in (Lemma \ref{Th2}) , we have $v_{2} (x,t)\leq u(x,t)\leq v_{1}(x,t)$ for all $t\geq 0$ and $x\in \Omega$. Therefore, $supp (u(t))\subset B_{r}(0)$ for all $t\geq 0$. Hence, this ends the proof.
\section{Proof of Theorem \ref{Th3}}
This section is devoted to the proof of Theorem \ref{Th3}. Next we prepare the following lemma. This will be employed to prove Theorem \ref{Th3}. 
\begin{lemma}\label{EWS}
Let $u(t)$ be a solution of the Dirichlet problem (\ref{E1}). Suppose that (\ref{CN}) holds. Then there holds 
\begin{equation}
	|u'(t)|\leq \frac{|u_{0}|m\left( \frac{1}{\mu m_{0}}M\left(\int_{\Omega} |Du_{0}|+\int_{\partial \Omega} |u_{0}|\,d\mathcal{H}^{N-1}|u_{0}|\right)\right)}{t},
\end{equation}
for almost  all $t> 0.$
\end{lemma}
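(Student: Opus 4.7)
The strategy is to exploit the time-change reduction $u(t)=v(\alpha(t))$ already established in the proof of Theorem~\ref{Th1}, where $v$ solves the unmodified Cauchy problem
\[
v'(s)+\partial \Phi(v(s))\ni 0,\qquad v(0)=u_{0},
\]
in $H=L^{2}(\Omega)$, with $\alpha(t)=\int_{0}^{t}m(\Phi(u(s)))\,ds$. Differentiating yields $u'(t)=m(\Phi(u(t)))\,v'(\alpha(t))$, so
\[
|u'(t)|=m(\Phi(u(t)))\,|v'(\alpha(t))|,
\]
and the task splits into bounding $|v'(\alpha(t))|$ and $m(\Phi(u(t)))$ separately.

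The first ingredient is the classical regularising estimate for the subgradient flow of a positively $1$-homogeneous convex l.s.c.\ functional on a Hilbert space: one has $|v'(s)|\le |u_{0}|/s$ for every $s>0$. The derivation uses Euler's identity $\langle\xi,v\rangle=\Phi(v)$ for every $\xi\in\partial\Phi(v)$, which is valid since $\Phi$ is positively $1$-homogeneous; this yields the two energy identities $\tfrac{1}{2}\tfrac{d}{ds}|v(s)|^{2}=-\Phi(v(s))$ and $\tfrac{d}{ds}\Phi(v(s))=-|v'(s)|^{2}$, together with the Br\'ezis fact that $s\mapsto|v'(s)|$ is non-increasing on $(0,\infty)$. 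Convexity of $s\mapsto\tfrac{1}{2}|v(s)|^{2}$ (its second derivative equals $|v'|^{2}\ge0$) evaluated at $s=0$ gives $\Phi(v(s))\le |u_{0}|^{2}/(2s)$; integrating $|v'|^{2}$ on $[s/2,s]$ and using the monotonicity of $|v'|$ delivers the $1/s$ estimate.

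For the second factor, differentiation along the flow gives
\[
\frac{d}{dt}\Phi(u(t))=-\frac{|u'(t)|^{2}}{m(\Phi(u(t)))}\le 0,
\]
so $\Phi(u(\cdot))$ is non-increasing, which is also consistent with (\ref{EN}). Hence $m(\Phi(u(t)))\le m(\Phi(u_{0}))$ by monotonicity of $m$. Condition~(\ref{CN}) applied at $\sigma=\Phi(u_{0})$, combined with $m(\Phi(u_{0}))\ge m_{0}$ from $(m_{1})$, yields
\[
\Phi(u_{0})\;\le\;\frac{M(\Phi(u_{0}))}{\mu\,m(\Phi(u_{0}))}\;\le\;\frac{M(\Phi(u_{0}))}{\mu\,m_{0}},
\]
so, using once more that $m$ is increasing, $m(\Phi(u_{0}))\le m\!\bigl(\tfrac{1}{\mu m_{0}}M(\Phi(u_{0}))\bigr)$. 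Combining this with $\alpha(t)\ge m_{0}\,t$ from $(m_{1})$ and substituting into $|u'(t)|=m(\Phi(u(t)))\,|v'(\alpha(t))|$ produces the claimed inequality.

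The main obstacle is the $1/s$ smoothing estimate for the standard total variation flow in this $BV$ setting: it relies on Br\'ezis' abstract theory of maximal monotone operators in Hilbert space, applied to the convex lower semicontinuous functional $\Phi(u)=\int_{\Omega}|Du|+\int_{\partial\Omega}|u|\,d\mathcal{H}^{N-1}$ on $H=L^{2}(\Omega)$, in order to guarantee that $v(s)\in D(\partial\Phi)$ for every $s>0$ and that $s\mapsto|v'(s)|$ is non-increasing. Once these points are in hand, together with Euler's identity for the $1$-homogeneous $\Phi$, the remaining algebraic manipulations driven by $(m_{1})$ and (\ref{CN}) are routine.
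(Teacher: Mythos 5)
Your argument is essentially the paper's proof: the paper likewise writes $u(t)=v(\alpha(t))$, invokes the smoothing estimate $|v'(s)|\le |u_{0}|/s$ for the pure total variation flow (citing \cite[Lemma 2]{Andreu2} rather than rederiving it), and controls the Kirchhoff factor by the same chain $\Phi(u(t))\le M(\Phi(u_{0}))/(\mu m_{0})$ obtained from (\ref{EN}), (\ref{CN}) and $(m_{1})$. Two constant-tracking remarks are in order. First, your sketch of the $1/s$ estimate (convexity of $s\mapsto\tfrac{1}{2}|v(s)|^{2}$ plus integration of $|v'|^{2}$ over $[s/2,s]$) actually yields $|v'(s)|\le \sqrt{2}\,|u_{0}|/s$ rather than $|u_{0}|/s$; to get the sharp constant one should, as the paper does, simply quote \cite[Lemma 2]{Andreu2}, whose proof uses the degree-zero homogeneity of $\partial\Phi$ more directly. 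Second, your explicit step $\alpha(t)\ge m_{0}t$ gives $|v'(\alpha(t))|\le |u_{0}|/(m_{0}t)$ and hence the stated bound only up to an extra factor $1/m_{0}$, which is harmless when $m_{0}\ge 1$ (as in all the examples treated in the paper) but not in general; the paper's own proof silently replaces $\alpha(t)$ by $t$ at this point, so your more careful bookkeeping in fact exposes a factor that the paper does not account for. Neither issue changes the qualitative $O(1/t)$ decay, and the rest of your argument matches the paper's step for step.
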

\begin{proof}
From (\ref{CN}), (\ref{EN}) and $(m_{1})$ we can easily obtain that 
\begin{equation}\label{EQ0}
	m\left(\int_{ \Omega} |Du(t)|\right)\leq m\left( \frac{1}{\mu m_{0}}M\left(\int_{\Omega} |Du_{0}|+\int_{\partial \Omega} |u_{0}|\,d\mathcal{H}^{N-1}\right)\right)
\end{equation}
Remembering that the solution (\ref{E1}) in the form $u(t) =v(\alpha(t))$ where $v$ is a solution of (\ref{E3}) and by \cite[Lemma 2]{Andreu2}, we have that the solution $v$ of (\ref{E2}) satisfies 
\begin{equation}\label{EQ1}
|v'(t)|\leq \frac{|u_{0}|}{t} \;\;\text{for almost  all}\; t> 0.
\end{equation}
Gathering (\ref{EQ0}) and (\ref{EQ1}), yields 
$$ |u'(t)|=\alpha'(t)|v'(t)|=	m\left(\int_{ \Omega} |Du(t)|\right) |v'(\alpha(t))|\leq \frac{|u_{0}|m\left( \frac{1}{\mu m_{0}}M\left(\int_{\Omega} |Du_{0}|+\int_{\partial \Omega} |u_{0}|\,d\mathcal{H}^{N-1}|u_{0}|\right)\right)}{t},$$ 
for almost  all $t> 0.$
\end{proof} 
Now let us turn to prove Theorem \ref{Th3}. By Theorem \ref{Th1} there exists $z(t) \in  X(\Omega)$, $\|z(t)\|_{\infty}\leq 1$, satisfying that 
\begin{equation}\label{EQQ11}
u'(t)-m\left(\int_{ \Omega} |Du(t)|\right)\text{div}(z(t))=0, \;\;\text{in}\;\mathcal{D}'(\Omega) \;\;\text{a.e}\;t\in [0,T]
\end{equation}
\begin{equation}\label{EQQ21}
\int_{\Omega} (z(t), Du(t))	=\int_{\Omega}|Du(t)|, 
\end{equation}
\begin{equation}\label{EQQ31}
[z(t), \nu]\in \text{sign}(-u(t))\;\;\mathcal{H}^{N-1}-\text{a.e}\;\;\text{on}\;\partial \Omega.	
\end{equation}
Multiplying (\ref{EQQ11})  by $w\in BV(\Omega)\cap L^{2}(\Omega)$, afterward integrating over $\Omega$ and using the Green's formula (\ref{Gree}), we get
\begin{equation}\label{AZ}
\int_{ \Omega} u'(t)w\,dx+m\left(\int_{ \Omega} |Du(t)|\right)\int_{ \Omega} (z, Dw)= \int_{\partial \Omega}[z(t), \nu]w\,d\mathcal{H}^{N-1},
\end{equation} 
 for every $w\in L^{2}(\Omega)\cap BV(\Omega)$.  Let $q\geq 1$, and $\varphi (r)=|r|^{q-1}r$. In view of (\ref{BO}) and \cite[Theorem 3.99]{Ambrosio}, we have that $\varphi (u)\in BV(\Omega)\cap L^{2}(\Omega)$. Then, taking $w=\varphi (u)$ as a test function in (\ref{AZ}), it follows that 
 \begin{equation}\label{AZ1}
 \int_{ \Omega} u'(t)\varphi (u)\,dx+m\left(\int_{ \Omega} |Du(t)|\right)\int_{ \Omega} (z, D\varphi (u))= \int_{\partial \Omega}[z(t), \nu]\varphi (u)\,d\mathcal{H}^{N-1},
 \end{equation} 
 Now, by \cite[Proposition 2.8]{Anz} and having in mind (\ref{EQQ21}), we have 
 $$ \int_{ \Omega} (z, D\varphi (u) )=\int_{ \Omega} \theta(z(t), D\varphi u(t), x)|D\varphi(u(t))=\int_{ \Omega} |D\varphi (u(t))|.$$ 
 Moreover, by (\ref{EQQ31})
 $$ [z(t), \nu]\varphi(u(t))=-|u(t)|^{q}\;\;\;\;\mathcal{H}^{N-1}-\text{a.e. on}\;\partial \Omega.$$
 Consequently, we get 
 \begin{equation}
 \frac{1}{q+1}\frac{d}{dt}\int_{ \Omega} |u(t)|^{q+1}\,dx+m_{0}\int_{ \Omega} |D\varphi (u(t))|+\int_{\partial\Omega} |u|^{q}\,d\mathcal{H}^{N-1}\leq 0. 
 \end{equation}
 By the continuous embedding (\ref{EMD}) there exists $\eta>0$ such that 
\begin{equation*}
 \frac{1}{q+1}\frac{d}{dt}\int_{ \Omega} |u(t)|^{q+1}\,dx+\eta\min\{m_{0}, 1\}\||u|^{q}\|_{\frac{N}{N-1}}\leq 0. 
 \end{equation*}
 Then, taking $q=N-1$ yields 
 \begin{equation}
\frac{d}{dt}\int_{ \Omega} |u(t)|^{N}\,dx+\eta \min\{m_{0}, 1\}N\left( \int_{ \Omega} |u(t)|^{N}\,dx\right)^{N-1/N}\leq 0. 
 \end{equation}
 Hence 
\begin{equation}\label{EC}
\frac{d}{dt}\left( \int_{ \Omega} |u(t)|^{N}\right)^{1/N}+\eta \min\{m_{0}, 1\}N\leq 0. 
\end{equation} 
Since $u(T^{*}(u_{0}))=0$, then by integrating (\ref{EC}) from $t$ to $T^{*} (u_{0})$, we obtain  (\ref{INQ}). This shows the first statement. For the second statement, by Lemma \ref{EWS} and  $u(T^{*}(u_{0}))=0$, for every $t\geq \tau$ we have 
\begin{eqnarray*}
 \left|\frac{u(x,t)}{T^{*}(u_{0})-t}\right|&=&\frac{|u(x,T^{*}(u_{0}))-u(x,t)|}{T^{*}(u_{0})-t}=\frac{1}{T^{*}(u_{0})-t}\left|\int_{t}^{T^{*}(u_{0})}u'(s)\,ds\right|\\	
 &\leq &\frac{1}{T^{*}(u_{0})-t}\int_{t}^{T^{*}(u_{0})}\frac{|u_{0}|m\left( \frac{1}{\mu m_{0}}M\left(\int_{\Omega} |Du_{0}|+\int_{\partial \Omega} |u_{0}|\,d\mathcal{H}^{N-1}|u_{0}|\right)\right)}{s}\,ds\\
 &\leq & \frac{|u_{0}|m\left( \frac{1}{\mu m_{0}}M\left(\int_{\Omega} |Du_{0}|+\int_{\partial \Omega} |u_{0}|\,d\mathcal{H}^{N-1}|u_{0}|\right)\right)}{\tau}.
\end{eqnarray*}
Hence this ends the proof of Theorem (\ref{Th3}).


\begin{thebibliography}{20}

\bibitem{Ambrosio} {\sc L. Ambrosio, N. Fusco and D. Pallara,} {\it Functions of Bounded Variation and Free Discontinuity Problems,} in: Oxford
Mathematical Monographs, The Clarendon Press, Oxford University Press, New York, ISBN: 0-19-850245-1, 2000,
xviii+434 pp.

\bibitem{Alves0} {\sc C.O. Alves }{\it \, A Berestycki-Lions type result for a class of problems involving the  1-Laplacian operator}, \, Comm. Contem. Mathematics. doi.org/10.1142/S021919972150022X.

\bibitem{Alves} {\sc C.O. Alves and T. Boudjeriou}, Existence of solution for a class of heat equation involving the 1-Laplacian operator. Submitted.


\bibitem{Andreu} {\sc F. Andreu, C. Ballester, V. Caselles and J. M. Maz\'on}, {\it The Dirichlet problem
	for the total variation flow,} J. Funct. Anal., 180, 347-403 (2001).

\bibitem{Andreu3} {\sc F. Andreu, C. Ballester, V. Caselles, and J. M. Maz\'on}, {\it  Minimizing total variation flow,}
Differential Integral Equations 14 (2001), 321–360.


\bibitem{Andreu2}{\sc F. Andreu,  V. Caselles. J.I. D\'iaz and J. M. Maz\'on}, {\it Some Qualitative Properties for the Total Variation Flow} J. Funct. Anal., 188, 516–547 (2002).

\bibitem{And} {\sc F. Andreu, V. Caselles and J.M. Maz\'on}, {\it  Parabolic quasilinear equations minimizing linear growth functionals}. Progress in Mathematics, 223, Birkhäuser Verlag, Basel (2004)

\bibitem{Anz} {\sc G. Anzellotti,} {\it  Pairings Between Measures and Bounded Functions and Compensated Compactness}, Ann. Mat. Pura Appl.135 (1) (1983) 293–318.


\bibitem {ANDR} {\sc F. Andreu, J.M. Maz\'on and J.S. Moll}, {\it The total variation 
	ow with nonlinear bound-
	ary conditions}. Asymptot. Anal. 43 (2005), no. 1-2, 9-46.


\bibitem{Abm} {\sc H. Attouch, G. Buttazzo and G. Michaille}, {\it Variational analysis in Sobolev and BV spaces: applications to PDEs and optimization}, MPS-SIAM, Philadelphia (2006).


\bibitem{BF} {\sc M. Bonforte and A. Figalli}, {\it Total variation flow and sign fast diffusion in one dimension}, Journal of Differential Equations, 252 (2012), pp. 4455–4480.


\bibitem{VBD1} {\sc V. B\"ogelein, F. Duzaar, and P. Marcellini}, {\it  A time dependent variational approach to
	image restoration}. Siam J. Imaging Sci, 8:968–1006, 2015.

\bibitem{VBD2} {\sc V. B\"ogelein, F. Duzaar, and C. Scheven},  {\it The total variation flow with time dependent
	boundary values}. Calc. Var. 55, 108 (2016). https://doi.org/10.1007/s00526-016-1041-4. 

\bibitem{H}	H. Br\'ezis, {\it Op\'erateurs Maximaux Monotones et semi-groupes des contractions dans les espaces de Hilbert}, North-Holland/American Elsevier, Amsterdam/London/New York, 1971.

\bibitem{ML} {\sc M. Chipot and B. Lovat }, {\it Some remarks on non local elliptic and parabolic problems}. Nonlinear Anal, 30(7)(1997), 4619-4627. 

\bibitem{JDH} {\sc J. I. Dı\'az and M. A. Herrero}, {\it Propriétés de support compact pour certaines équations
	elliptiques et parabolique non linéaires}, C. R. Acad. Sci. Paris 286 (1978), 815–817.


\bibitem{Evn} {\sc L. C. Evans and R. F. Gariepy}, {\it  Measure Theory and Fine Properties of Functions, Studies in Advanced Math}. CRC Press, Boca Raton, FL, 1992.


\bibitem {KE} {\sc  K. Ecker},{\it  Progress in Nonlinear Differential Equations and their Applications}, chapter
Regularity Theory for Mean Curvature Flow. Birkh¨auser Boston Inc, Boston,
MA, 2004.

\bibitem{MG4} {\sc M. Gobbino},  {\it Quasilinear degenerate parabolic equations of Kirchhoff type}. Math.
Methods Appl. Sci. 22 (1999), no. 5, 375–388.

\bibitem{GZZ1} {\sc U. Gianazza, C. Klaus}, {\it p-parabolic approximation of total variation flow solutions}. Indiana Univ. Math. J. 60(5):1519–1550, 2019.

\bibitem{Giga1}  {\sc M.-H. Giga, Y. Giga and R. Kobayashi}, {\it Very singular diffusion equations}, in Adv. Stud.
Pure Math., 31 (2001), Taniguchi Conference on Mathematics Nara ’98, 93–125.

\bibitem{YG1} {\sc Y. Giga, R. Kohn}, {\it Scale-invariant extinction time estimates for some singular diffusion equations}, Discrete Contin. Dyn.
Syst. 30 (2) (2011) 509–535.


\bibitem {DH1}{\sc  D. Hauer and J. M. Maz\'on}. {\it Regularizing effects of homogeneous evolution equations: the case of homogeneity
	order zero.} Journal of Evolution Equations, pages 1–32, 2019.

\bibitem{VAZ} {\sc M. Herrero and J. L. Vazquez}, {\it On the propagation properties of a nonlinear degenerate
	parabolic equation}, Comm. Partial Differential Equations 7 (1982), 1381–1402.


\bibitem{Juu}{\sc P. Juutinen}, {\it  $p$-harmonic approximation of functions of least gradient}. Indiana Univ
Math J, 54:1015–1029, 2005.




\bibitem{JKN} {\sc J. Kinnunen and C. Scheven},  {\it On the definition of solution to the total variation flow}, arXiv:2106.05711 [math.AP].


\bibitem{Kawohl} {\sc B. Kawohl and F. Schuricht }, {\it Dirichlet problems for the $1-$Laplace operator, including the eigenvalue problem}, Commun. Contemp. Math., 9, No. 4, 525 - 543 (2007).

\bibitem{SW}{\sc Leon and C. Webler},{\it Global existence and uniqueness for the inhomogeneous 1-Laplace evolution equation}, NoDEA Nonlinear Differential Equations Appl. 22(2015), 1213-1246.


\bibitem {DH2} {\sc J. M. Maz\'on, J. D. Rossi, and J. Toledo}, {\it Fractional p-Laplacian evolution equations}, J.Math. Pures
Appl. (9), 105 (2016), pp. 810–844.

\bibitem {DH2} {\sc J. M. Maz\'on, J. D. Rossi, and J. Toledo}, {\it Fractional p-Laplacian evolution equations}, J.Math. Pures
Appl. (9), 105 (2016), pp. 810–844.


\bibitem {DH3} {\sc J. M. Maz\'on, J. D. Rossi, and S. Segura de Le\'on}, {\it  Two different boundary value problems for the $1$-Laplacian equation}. Preprint. 


\bibitem{MRST} {\sc A. Mercaldo, J.D. Rossi, S. Segura de Le\'on and C. Trombetti}, {\it \, Behaviour of $p$-Laplacian problems with Neumann boundary conditions when $p$ goes to 1}, Comm. Pure. Appl. Anal. 12 (2013), 253-267. 

\bibitem{MST}{\sc A. Mercaldo, S. Segura de Le\'on and C. Trombetti}, On the behaviour of the solutions to $p$-Laplacian equation as $p$ goes to 1. Publ. Mat. 52 (2008), 377-411

\bibitem{sergio} {\sc A. Molino Salas and S. Segura de Le\'on}, {\it \, Elliptic equations involving the 1-Laplacian and a subcritical source term}, Nonlinear Anal,  168 (2018) 50–66

\bibitem{Ziem} {\sc W. P. Ziemer}, {\it Weakly Differentiable Functions}, GTM 120, Springer-Verlag, Berlin, 1989.

\end{thebibliography}
\end{document}